\author{Ievgen Bondarenko}
\title{\textbf{The word problem and growth of groups}}
\newcommand{\DTIME}{\mathrm{DTIME}}
\newcommand{\WP}{\mathrm{WP}}
\newcommand{\Poly}{\mathrm{P}}
\newcommand{\CFL}{\mathrm{CFL}}
\newcommand{\boxchar}[2][A]{\fbox{\eqmakebox[#1][c]{$\mathstrut#2$}}}
\newtheorem{theorem}{Theorem}
\newtheorem{proposition}[theorem]{Proposition}
\newtheorem{corollary}{Corollary}[theorem]
\theoremstyle{definition}
\newtheorem{definition}{Definition}
\newtheorem{example}{Example}
\newtheorem{question}{Question}
\begin{document}
\maketitle

\begin{abstract}
Let $\WP_G$ denote the word problem in a finitely generated group $G$. We consider the complexity of $\WP_G$ with respect to standard deterministic Turing machines. Let $\DTIME_k(t(n))$ be the complexity class of languages solved in time $O(t(n))$ by a Turing machine with $k$ tapes. We prove that $\WP_G\in\DTIME_1(n\log n)$ if and only if $G$ is virtually nilpotent. We relate the complexity of the word problem and the growth of groups by showing that $\WP_G\not\in \DTIME_1(o(n\log\gamma(n)))$, where $\gamma(n)$ is the growth function of $G$. We prove that $\WP_G\in\DTIME_k(n)$ for strongly contracting automaton groups, $\WP_G\in\DTIME_k(n\log n)$ for groups generated by bounded automata, and $\WP_G\in\DTIME_k(n(\log n)^d)$ for groups generated by polynomial automata. In particular, for the Grigorchuk group, $\WP_G\not\in\DTIME_1(n^{1.7674})$ and $\WP_G\in\DTIME_1(n^2)$.

\vspace{0.2cm}\textit{2020 Mathematics Subject Classification}: 20F10, 68Q70, 03D10, 20E08

\textit{Keywords}: word problem, time complexity, group growth, automaton group
\end{abstract}


\section{Introduction}

Let $G$ be a finitely generated group, and $S$ be a finite generating set closed under inversion. The word problem in $G$ with respect to $S$ is the language $\WP(G,S)=\{ w\in S^{*} : w=_Ge \}$. Different properties of the language $\WP(G,S)$ are typically do not depend on the choice of $S$, and one can talk about groups whose word problem $\WP_G$ satisfies a given property.

The study of the word problem is rich with many beautiful results. In this paper, we consider the word problem with respect to the time complexity of standard deterministic Turing machines. For our purpose, we will distinguish Turing machines with a single tape and multiple tapes. Let $\DTIME_1(t(n))$ and $\DTIME_{*}(t(n))$ be the complexity classes of all languages solved in time $O(t(n))$ by a Turing machine with a single tape and multiple tapes respectively. Note that $\DTIME_{*}(t(n))\subseteq\DTIME_1(t(n)^2)$ by a classical result of Hartmanis and Stearns \cite{Hartmanis}.

Anisimov \cite{Anisimov} proved that the language $\WP_G$ is regular if and only if the group $G$ is finite. Note that the class of regular languages coincides with the complexity class $\DTIME_1(n)$. In contrast, the class $\DTIME_{*}(n)$ contains the word problems for many classes of groups: hyperbolic groups, finitely generated nilpotent and geometrically finite hyperbolic groups (see \cite{Holt:realtime}); there is no known description of groups in $\DTIME_{*}(n)$. The classical result of Kobayashi \cite{Kobayashi} states that there is a gap in the time complexity between $O(n)$ and $O(n\log n)$ for single-tape Turing machines, namely $\DTIME_1(n)=\DTIME_1(o(n\log n))$. In particular, $\WP_G\in\DTIME_1(o(n\log n))$ implies the group $G$ is finite. The word problem in the group $\mathbb{Z}$ separates the classes $\DTIME_1(n)$ and $\DTIME_1(n\log n)$. We characterize the word problem in the class $\DTIME_1(n\log n)$.

\begin{theorem}
Let $G$ be a finitely generated group. Then $\WP_G\in \DTIME_1(n\log n)$ if and only if $G$ is virtually nilpotent.
\end{theorem}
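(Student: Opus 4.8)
The plan is to prove both implications by translating time complexity into a statement about the growth function $\gamma(n)$ and then invoking Gromov's polynomial growth theorem, which asserts that a finitely generated group is virtually nilpotent if and only if $\gamma(n)$ is bounded by a polynomial. Thus the theorem is equivalent to the assertion that $\WP_G\in\DTIME_1(n\log n)$ if and only if $G$ has polynomial growth, and I would handle the two directions separately.

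For the direction ``virtually nilpotent $\Rightarrow\DTIME_1(n\log n)$'', I would first pass to a finite-index torsion-free nilpotent normal subgroup $N\trianglelefteq G$ (which exists since $G$ is linear, by Selberg's lemma), and fix an embedding $N\hookrightarrow UT_d(\mathbb{Z})$ coming from a Mal'cev basis. Each element of $G$ is represented uniquely as $g_iM$ with $g_i$ one of the finitely many coset representatives of $N$ and $M\in UT_d(\mathbb{Z})$. The single-tape machine scans the input $w=s_1\cdots s_n$ once, keeping next to its head a register holding the current pair (coset index, entries of the running matrix $M$). Because the prefix $s_1\cdots s_k$ has $G$-length $O(n)$ and $N$ is undistorted, its $N$-component is a product of $O(n)$ fixed generator matrices, whose entries have magnitude $n^{O(1)}$; hence the register has size $O(\log n)$. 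Reading a generator $s$ updates the state via $g_iMs=g_{\sigma(i)}\,c_{i,s}\,(C_s^{-1}MC_s)$, where $\sigma(i)$, the cocycle $c_{i,s}\in N$, and the conjugating matrix $C_s$ are all fixed constants. The crucial point is that left multiplication by a fixed unitriangular matrix and conjugation by a fixed matrix are \emph{integer-linear} maps on the entries with bounded coefficients, so each update costs only $O(\log n)$ time and never multiplies two long integers together. Dragging the $O(\log n)$-bit register alongside the advancing head also costs $O(\log n)$ per symbol, giving total time $O(n\log n)$; the machine accepts iff the final state is $(e,I)$.

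For the direction ``$\DTIME_1(n\log n)\Rightarrow$ virtually nilpotent'', I would argue by contraposition. If $G$ is not virtually nilpotent, then by Gromov's theorem its growth is super-polynomial, i.e. $\log\gamma(n)/\log n\to\infty$, so $n\log n=o(n\log\gamma(n))$. The growth lower bound $\WP_G\notin\DTIME_1(o(n\log\gamma(n)))$ then immediately yields $\WP_G\notin\DTIME_1(n\log n)$. The lower bound itself comes from a crossing-sequence argument for single-tape machines: feeding inputs of the form $u\,v^{-1}$ with $u,v$ geodesic words of length $m$, a machine deciding $\WP$ must distinguish the $\approx\gamma(m)$ possible values of $u$ across the cut separating the two halves, so its crossing sequences there carry $\Omega(\log\gamma(m))$ bits; the standard single-tape counting argument then forces running time $\Omega(n\log\gamma(n))$.

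The main obstacle is twofold. On the algorithmic side, the whole $O(n\log n)$ bound (rather than $O(n(\log n)^2)$) hinges on realising every per-symbol update purely additively: this is exactly why one must use the unitriangular embedding, where multiplication and conjugation act linearly on the entries, rather than an arbitrary faithful matrix representation in which entries could blow up exponentially. On the structural side, the reverse implication rests essentially on the full strength of Gromov's theorem to convert ``not virtually nilpotent'' into ``super-polynomial growth'', together with the crossing-sequence lower bound, whose careful formulation to produce precisely the $\log\gamma(n)$ factor is the technical heart of the argument.
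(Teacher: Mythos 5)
Your reverse direction coincides with the paper's: the crossing-sequence lower bound $\WP_G\notin\DTIME_1(o(n\log\gamma(n)))$ combined with Gromov's theorem. (One caution: the paper's lower bound uses padded inputs $w(aa^{-1})^{k}w^{-1}$ so that, by pigeonhole over the $\sim n$ cut positions in the middle stretch, some crossing sequence has length $\leq T(4n)/n$; your unpadded $uv^{-1}$ only yields $T(n)=\Omega(\log\gamma(n))$ at the single cut, so the padding is where the factor $n$ comes from.) The forward direction is genuinely different. The paper does no integer arithmetic: citing Goodman--Shapiro, it embeds the torsion-free nilpotent part into a nilpotent group with an expanding endomorphism $\phi$ of expansion factor $\geq 4$ and runs a halving algorithm --- each $O(n)$ pass rewrites $w$ into a word for $\phi^{-1}(w)$ with half as many non-identity letters, so $O(\log n)$ passes suffice. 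Your algorithm is a single scan carrying an $O(\log n)$-bit register of unitriangular matrix entries; it avoids expanding endomorphisms but needs two points made precise before it closes. First, you must exhibit a faithful integer representation of all of $G$ (not merely of $N$) in which $N$ is unipotent and conjugation by each generator $s$ is realized by a fixed integer matrix $C_s$; in bare Mal'cev coordinates the automorphism $M\mapsto s^{-1}Ms$ is polynomial rather than linear, which would wreck the additive-update claim. The induced representation $\mathrm{Ind}_N^G$ of the embedding $N\hookrightarrow UT_d(\mathbb{Z})$ supplies such a representation. Second, ``each update costs $O(\log n)$'' on a \emph{single} tape holds only if the $O(d^2)$ entries are stored bit-interleaved (all bits of weight $2^b$ grouped in adjacent cells), so that a bounded-coefficient integer-linear combination of the entries is computable in one pass with the carries held in the finite control; storing the entries as separate contiguous blocks forces $\Theta(\log n)$ shuttling per output bit and degrades the total to $O(n(\log n)^2)$, which no longer proves the theorem. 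With those two repairs your route is sound, and it is a legitimate alternative to the paper's.
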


In one direction, we use the existence of expanding endomorphisms of the unitriangular group $UT_n(\mathbb{Z})$, proved in \cite{GoodmanShapiro}. For the opposite, using the standard technique from the complexity theory, we relate the growth of a group and the complexity of its word problem.

\begin{theorem}
Let $G$ be a finitely generated group with the growth function $\gamma_G(n)$. Then $\WP_G\not\in \DTIME_1(o(n\log \gamma(n)))$. In particular, for groups of exponential growth the word problem in not solvable in time $o(n^2)$ by a single-tape Turing machine.
\end{theorem}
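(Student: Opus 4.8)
I would argue by the crossing-sequence method, the standard tool for single-tape lower bounds. Suppose a single-tape machine $M$ with state set $Q$ decides $\WP(G,S)$ in time $T(n)$. For an input $w$ of length $N$ and a boundary $p$ (between cells $p$ and $p+1$), let $C_p(w)$ denote the crossing sequence at $p$, i.e.\ the ordered list of states in which the head crosses that boundary. Two standard facts drive everything: first, every computation step crosses at most one boundary, so $\sum_{p=0}^{N}|C_p(w)|\le T(N)$; second, the splicing lemma: if two inputs of the same length have $C_p(x)=C_p(y)$, then $M$ accepts the mixed input (left part of $x$ followed by right part of $y$) iff it accepts $x$. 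The plan is to manufacture, for each scale, a fooling set forcing crossing sequences to be long, and then to sum these contributions across boundaries. For a word $u=u_1\cdots u_n\in S^{n}$ I use the nested input $w_u=u_1\cdots u_n\,\widetilde{u_n}\cdots\widetilde{u_1}$ of length $2n$, where $\widetilde{\cdot}$ denotes the formal inverse letter; telescoping gives $w_u=_G e$, so $w_u\in\WP(G,S)$. Fix a boundary $p\le n$ and two words $u,u'$ that agree on positions $p+1,\dots,n$. Splicing the left part of $w_u$ with the right part of $w_{u'}$ and telescoping the common middle block yields a word equal in $G$ to $(u_1\cdots u_p)(u'_1\cdots u'_p)^{-1}$, which lies in $\WP$ iff $[u_1\cdots u_p]=[u'_1\cdots u'_p]$ in $G$. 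Hence, for a fixed suffix, the map $g_p:=[u_1\cdots u_p]\mapsto C_p(w_u)$ is \emph{group-injective}: distinct prefix elements force distinct crossing sequences.

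Now I extract a length bound at each scale. Since the length-$p$ prefixes realize every element of the ball $B(p)$, group-injectivity produces at least $\gamma(p)=|B(p)|$ distinct crossing sequences at boundary $p$. The number of crossing sequences of length $<\tfrac12\log_{|Q|}\gamma(p)$ is below $|Q|^{\frac12\log_{|Q|}\gamma(p)}=\gamma(p)^{1/2}$, so all but at most $\gamma(p)^{1/2}$ of the scale-$p$ elements are \emph{expensive}, meaning any word presenting them at boundary $p$ (with the relevant suffix fixed) has $|C_p|\ge\tfrac12\log_{|Q|}\gamma(p)$. If I can exhibit, for each $n$, a single input of length $2n$ whose crossing sequence is expensive at a constant fraction of the boundaries $p\le n$, then
\begin{equation}
T(2n)\ \ge\ \sum_{p\ \text{expensive}}\tfrac12\log_{|Q|}\gamma(p)\ \ge\ c\sum_{p=1}^{n}\log\gamma(p).
\end{equation}
Submultiplicativity $\gamma(2m)\le\gamma(m)^2$ gives $\log\gamma(n/2)\ge\tfrac12\log\gamma(n)$, whence $\sum_{p=1}^{n}\log\gamma(p)\ge\tfrac{n}{4}\log\gamma(n)$, so the displayed bound reads $T(2n)=\Omega(n\log\gamma(n))$, contradicting $T=o(n\log\gamma(n))$. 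Since $N=2n$ and $\log\gamma(N)=\Theta(\log\gamma(n))$, this is exactly the claimed exclusion from $\DTIME_1(o(n\log\gamma(n)))$.

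The main obstacle is producing one input that is expensive at many boundaries \emph{simultaneously}. The difficulty is that the nested construction couples the scales: the group-injective fooling argument at boundary $p$ is valid only once the suffix $u_{p+1}\cdots u_n$ is frozen, yet that suffix is itself part of the same word that determines the prefix elements at every other scale, so a naive scale-by-scale greedy choice is not available, and a uniform average over all of $S^{n}$ is too lossy (for slow growth most prefixes collapse onto a few cheap elements, washing out the signal). I plan to resolve this by selecting the input non-uniformly so that the prefix elements are spread out at every scale. Concretely, I would restrict to words built from a fixed geodesic spanning tree of the Cayley graph (so that words and elements essentially coincide and fibre effects disappear) and then choose the path by a product/random-walk mechanism whose increments are independent; independence makes the prefix independent of the suffix, so the conditional group-injective count applies at each boundary, and a bound on the concentration $\max_g\Pr[g_p=g]$ in terms of $\gamma(p)$ turns the per-scale estimate into $\Pr[\,|C_p|\ \text{small}\,]=o(1)$. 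Linearity of expectation over $p=1,\dots,n$ then yields a single path expensive at a constant fraction of boundaries, completing (1). Controlling that concentration quantity against $\gamma(p)$ is the delicate analytic point and is where I expect the real work to lie; everything else is routine crossing-sequence bookkeeping. The final sentence of the statement is immediate: for exponential growth $\log\gamma(n)=\Theta(n)$, so $n\log\gamma(n)=\Theta(n^2)$ and the word problem is not solvable in time $o(n^2)$ on a single tape.
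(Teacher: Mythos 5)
Your setup (crossing sequences, the splicing lemma, and the observation that equal crossing sequences on two accepted inputs force the spliced word into $\WP$ and hence force an equality of prefix elements in $G$) matches the paper's starting point, but the argument then heads in a direction that does not close, and you say so yourself: you need a \emph{single} input whose crossing sequences are long at a constant fraction of the boundaries, and the mechanism you sketch for producing it is genuinely problematic. Choosing $u$ by independent increments makes the prefix distribution that of a random walk, and the entropy of the time-$p$ walk can be far smaller than $\log\gamma(p)$ (for the lamplighter group $\mathbb{Z}_2\wr\mathbb{Z}$ the walk is Liouville, so the entropy is $o(p)$ while $\log\gamma(p)=\Theta(p)$), so the concentration bound $\max_g\Pr[g_p=g]\le\gamma(p)^{-1+o(1)}$ that your union-bound needs is simply false in general; restricting instead to a geodesic spanning tree destroys the independence of increments on which the conditional fooling argument relies. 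The ``delicate analytic point'' you flag is therefore not a technicality but an obstruction to this route.

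The idea you are missing is that no single expensive input is needed. The paper runs the count in the opposite direction: take one word $w$ for each of the $\approx\gamma(n)$ distinct nontrivial elements of the ball, pad it to the fixed-length input $w(aa^{-1})^{2n-|w|}w^{-1}$ of length $4n$, and observe that (i) by averaging over the $\approx n$ even boundaries in $(n,3n)$, each such input has \emph{some} boundary $i$ where its crossing sequence has length at most $T(4n)/n$, and (ii) these short crossing sequences are pairwise distinct across different inputs even when they occur at different boundaries $i\neq j$, because the $(aa^{-1})^{k}$ padding guarantees that splicing the length-$i$ prefix of one input with the length-$(4n-j)$ suffix of another always telescopes to $w_1w_2^{-1}\neq_G e$, contradicting acceptance of the spliced word. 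Counting crossing sequences of length at most $T(4n)/n$ then yields $\tfrac12\gamma(n)\le q^{T(4n)/n+1}$, i.e.\ $T(4n)=\Omega(n\log\gamma(n))$, with no distributional control over group elements required. Note that your nested input $u\widetilde{u}$ lacks this padding, so comparisons across different boundaries are uncontrolled and your per-boundary fooling sets cannot be aggregated into a bound on $T$. The final reduction to the exponential-growth case is fine.
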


Then Gromov's celebrated result on groups of polynomial growth implies that for non-virtually nilpotent groups the word problem is not solvable in time $O(n\log n)$ by a single-tape Turing machine. Moreover, the known gap in the growth of groups between polynomial and $n^{(\log\log n)^c}$ for certain $c>0$ proved in \cite{ShalomTao} implies the gap in the complexity of the word problem: there is no word problem strictly between the complexity classes $\DTIME_1(n \log n )$ and $\DTIME_1(n\log n(\log\log n)^c)$. The Grigorchuk's gap conjecture (that $\gamma_G(n)\prec exp(\sqrt{n})$ implies polynomial growth, see \cite{Gri:growth} and \cite{Gri:Milnor}) entails the gap for the word problems between $\DTIME_1(n\log n)$ and $\DTIME_1(n^{3/2})$. I actually do not know examples of groups with the word problem in $\DTIME_1(o(n^2))\setminus \DTIME_1(n\log n)$.

\begin{question}
Does $\WP_G\in\DTIME_1(o(n^2))$ imply $G$ is virtually nilpotent?
\end{question}

The question is specific to groups of intermediate growth. A large known class of such groups appears as automaton groups, i.e., groups generated by Mealy automata. The word problem in automaton groups is solvable in exponential time and polynomial space, and there exists an automaton group with a $\textsf{PSPACE}$-complete word problem (see \cite{WachterWeiss:PSPACE}). The well-known examples of automaton groups of intermediate growth are generated by bounded automata. Bounded automaton groups belong to the class of contracting groups, where the word problem is solvable in polynomial time. We prove that linearithmic time is sufficient for multi-tape Turing machines.

\begin{theorem}
Let $G$ be a group generated by a bounded automaton. Then $\WP_G\in \DTIME_{*}(n\log n)$. If $G$ is strongly contracting, then $\WP_G\in \DTIME_{*}(n)$.
\end{theorem}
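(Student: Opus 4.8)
The plan is to run the standard recursive word-problem algorithm for self-similar groups and to read off its running time from two quantities: the depth of the recursion and the total length of the words stored at each level. Recall that an element $g$ acts on the rooted tree $X^{*}$ and that $g=_G e$ if and only if $g$ fixes the first level and every section $g|_x$, $x\in X$, is trivial; iterating, $g=_G e$ exactly when $g$ acts trivially at every vertex of $X^{*}$. Given an input word $w=s_1\cdots s_n$ I would traverse the sections breadth-first, storing at each vertex $v\in X^{*}$ a word representing $w|_v$ and computing its children through the cocycle identity $(s_1\cdots s_m)|_x = s_1|_x\, s_2|_{s_1(x)}\cdots s_m|_{(s_{m-1}\cdots s_1)(x)}$. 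On a multi-tape machine this is a single left-to-right sweep that simultaneously tracks the prefix permutation acting on the letter $x$, tests whether $w|_v$ fixes the first level, and writes out the $|X|$ child words; with one tape used as a stack the words are kept reduced, so the stored length of $w|_v$ is comparable to its group length. The traversal stops at a vertex once its word has bounded length, where it is rewritten as a nucleus element and its triviality is decided by a finite table; correctness follows because contraction forces all deep sections into the finite nucleus, and a nontrivial nucleus element betrays itself by a nontrivial action within bounded depth.

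Write $L_k=\sum_{|v|=k}|w|_v|$ for the total stored length at level $k$ and let $D$ be the depth at which every section has entered the nucleus. The per-vertex work is linear in its word length, so the whole computation runs in time $O\!\left(\sum_{k\le D}L_k\right)$, and the two parts of the theorem reduce to estimating this sum. Since bounded automaton groups are contracting, each section satisfies $|w|_v|\le \lambda|w|_{v'}|+C$ along a path ($v'$ the parent of $v$), whence $|w|_v|=O(1)$ once $|v|\gtrsim\log_{1/\lambda} n$; thus $D=O(\log n)$.

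The heart of the bounded case is showing that $L_k$ does not grow, i.e.\ $L_k=O(n)$ for all relevant $k$. Here I would invoke the defining structure of a bounded automaton: the activity is bounded, so only a constant number of vertices at each level carry an \emph{active} (non-finitary) section, and it is exactly at those vertices that passing to sections can increase total length. Everywhere else the cocycle identity distributes the letters of $w|_v$ among its children without creating new ones, giving $\sum_x|w|_{vx}|\le |w|_v|$. Combining the two contributions yields $L_{k+1}\le L_k+O(1)$, hence $L_k\le n+O(k)=O(n)$ throughout the range $k\le D=O(\log n)$. Summing, $\sum_{k\le D}L_k=O(n\log n)$, which gives $\WP_G\in\DTIME_{*}(n\log n)$. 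For the strongly contracting case the hypothesis is the stronger inequality $\sum_{x\in X}|g|_x|\le \lambda|g|+C$ with $\lambda<1$, i.e.\ the \emph{total} length of the first-level sections contracts. Applying it vertex by vertex and absorbing the additive constant by thresholding the recursion at a large enough constant length gives $L_{k+1}\le \lambda' L_k$ with $\lambda'<1$; the $L_k$ now decay geometrically, so $\sum_k L_k=O(n)$ and $\WP_G\in\DTIME_{*}(n)$.

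I expect the main obstacle to be the length bookkeeping rather than the recursion itself. Two points need genuine care: first, that the algorithm can maintain, in linear time per vertex, representatives of the sections whose \emph{stored} length is controlled by the \emph{group} length (so that the contraction inequalities, which are statements about group elements, actually apply to what the machine manipulates) — this is where reduction together with the finiteness of the nucleus is used. Second, and more substantially, that the bounded-automaton hypothesis really forces only $O(1)$ active vertices per level for an arbitrary product $s_1\cdots s_n$, not merely for a single generator; establishing this propagation estimate from the disjoint-cycle structure of bounded automata is the crux on which the $O(n)$ bound for $L_k$, and hence the whole $n\log n$ estimate, rests. The multi-tape scheduling of the breadth-first traversal (keeping the frontier of words on the tapes and producing children without asymptotic overhead) is routine by comparison.
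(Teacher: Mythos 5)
Your overall architecture is the same as the paper's: compute sections level by level, use contraction to bound the recursion depth by $O(\log n)$, bound the total stored length $L_k$ at level $k$ by $O(n)$ in the bounded case, and get geometric decay of $L_k$ in the strongly contracting case. The strongly contracting half and the depth bound are fine. But the step you yourself flag as the crux --- the $O(n)$ bound on $L_k$ --- is argued through an intermediate claim that is false, and you leave it unproven. It is not true that an arbitrary product $w=s_1\cdots s_n$ has only $O(1)$ ``active'' vertices per level: each input letter can contribute up to $C$ vertices at level $k$ carrying a nontrivial section, and these land at different vertices for different letters, so the number of active vertices at a level can be $\Theta(n)$ (already for $a^n$ in the adding machine every level up to depth $\log n$ has many active vertices). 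Consequently your recursion $L_{k+1}\le L_k+O(1)$ fails as stated; for instance $L_1-L_0$ can be $\Theta(n)$ when each generator has two nontrivial first-level sections.

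The fix is the direct global count the paper uses, and it needs no per-level recursion at all. By the cocycle identity, the nontrivial letters appearing among all sections $w|_v$, $v\in X^k$, are exactly the letters $s_i|_{u}$ with $s_i|_u\neq e$, where $u$ ranges over $\sigma_i(X^k)$ for a bijection $\sigma_i$ of $X^k$ induced by the action of the prefix $s_1\cdots s_{i-1}$. Hence the total number of nontrivial letters at level $k$ is $\sum_{i=1}^n \#\{u\in X^k : s_i|_u\neq e\}\le Cn$, where $C$ is the constant from the definition of a bounded automaton applied to each generator separately. So $L_k\le Cn$ for every $k$ immediately, and summing over the $O(\log n)$ levels gives $O(n\log n)$. (Two smaller points you should also nail down: the machine must discard empty sections and redundant separators, since the number of vertices at level $k$ is $|X|^k\gg n$ even though the total letter count is $O(n)$; and ``keeping words reduced so that stored length is comparable to group length'' is not something free reduction gives you --- the paper instead precomputes geodesic representatives $w_x$ for sections of blocks of a fixed length $L$, which is what makes the contraction inequalities apply to the stored words.)
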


All strongly contracting groups have subexponential growth. The famous example is the Grigorchuk group $G$ (see \cite{Grigorchuk:Degrees}). The growth function of $G$ is essentially equivalent to $\exp(n^\alpha)$, where $\alpha=\frac{\log 2}{\log \eta}\approx 0.7674$ and $\eta$ is the positive root of the polynomial $x^3-x^2-2x-4$ (see \cite{ErschlerZheng:GriGrowth}). Since $\DTIME_{*}(n)\subseteq\DTIME_1(n^2)$, we get the following corollary:

\begin{corollary}
Let $G$ be the Grigorchuk group. Then $\WP_G\in\DTIME_1(n^2)$ and $\WP_G\not\in\DTIME_1( o(n^{1+\alpha}) )$.
\end{corollary}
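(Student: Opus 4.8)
The plan is to obtain both bounds as direct specializations of the theorems already established to the Grigorchuk group $G$, so the proof is essentially a matter of assembling the right ingredients.

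For the upper bound $\WP_G\in\DTIME_1(n^2)$, I would first use the fact that the Grigorchuk group is strongly contracting, so that the second assertion of Theorem~3 applies and gives $\WP_G\in\DTIME_{*}(n)$. It then remains only to pass from multi-tape to single-tape machines. For this I would invoke the Hartmanis--Stearns simulation recorded in the introduction, namely $\DTIME_{*}(t(n))\subseteq\DTIME_1(t(n)^2)$, applied with $t(n)=n$; this yields $\DTIME_{*}(n)\subseteq\DTIME_1(n^2)$ and hence $\WP_G\in\DTIME_1(n^2)$.

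For the lower bound $\WP_G\notin\DTIME_1(o(n^{1+\alpha}))$, I would apply Theorem~2, which asserts $\WP_G\notin\DTIME_1(o(n\log\gamma(n)))$, together with the known growth estimate for $G$. Since the growth function of the Grigorchuk group is equivalent to $\exp(n^\alpha)$ with $\alpha=\log 2/\log\eta$, its logarithm satisfies $\log\gamma(n)=\Theta(n^\alpha)$: the multiplicative constants hidden in the coarse equivalence $\gamma(n)\asymp\exp(n^\alpha)$ affect the argument of the exponential by a bounded factor and hence contribute only bounded multiplicative factors to $\log\gamma(n)$, which are absorbed by the $\Theta$. Consequently $n\log\gamma(n)=\Theta(n^{1+\alpha})$, so the bounds $o(n\log\gamma(n))$ and $o(n^{1+\alpha})$ describe the same class of time functions, and Theorem~2 delivers exactly $\WP_G\notin\DTIME_1(o(n^{1+\alpha}))$.

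Taken together the two halves establish the corollary. I expect no serious obstacle: the substantive content lives in Theorems~2 and~3 and in the strong contraction of the Grigorchuk group, all of which I am treating as available. The only points requiring mild care are verifying that $G$ meets the precise definition of strongly contracting used in Theorem~3, and confirming that the coarse growth equivalence $\gamma(n)\asymp\exp(n^\alpha)$ is two-sided and hence genuinely forces $\log\gamma(n)=\Theta(n^\alpha)$, rather than only a one-sided comparison; both are routine.
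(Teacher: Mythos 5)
Your proposal is correct and follows exactly the paper's route: strong contraction of the Grigorchuk group plus Theorem~3 gives $\WP_G\in\DTIME_{*}(n)$, the Hartmanis--Stearns simulation gives $\DTIME_1(n^2)$, and Theorem~2 with $\log\gamma(n)=\Theta(n^{\alpha})$ gives the lower bound. Note only that for the lower bound a one-sided estimate $\gamma(n)\succcurlyeq 2^{n^{\alpha}}$ already suffices, as in the paper's corollary following Theorem~2.
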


I do not know the exact complexity of the word problem in the Grigorchuk group. Since $G$ is a branch group (it contains a subgroup of finite index $K$ such that $K\times K$ is a subgroup of finite index in $K$), the word problem in $G$ could be not solvable in time $o(n^2)$ by a single-tape Turing machine.

Bounded automata are polynomial automata of zero degree. In \cite{Bondarenko:Schreier}, it is proved that for groups generated by polynomial automata the word problem is solvable in subexponential time. A more careful analysis gives quasilinear time.

\begin{theorem}
Let $G$ be a group generated by a polynomial automaton of degree $d$. Then $\WP_G\in \DTIME_{*}(n (\log n)^{(d+1)^2})$. If $G$ is contacting, then $\WP_G\in\DTIME_{*}(n(\log n)^{d+1})$.
\end{theorem}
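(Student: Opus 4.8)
The plan is to run the natural recursive ``section decomposition'' algorithm level by level and to bound the total work by controlling two quantities: the total length of all sections at a given level, and the depth to which one must descend. Since the group acts faithfully on the rooted tree $X^{*}$, a word $g=s_1\cdots s_n$ in the generators represents the identity iff every top permutation $\sigma_{g|_v}$ vanishes over all tree vertices $v$. I would process the levels $0,1,2,\dots$ in turn, maintaining at level $\ell$ the list of sections $g|_v$, $v\in X^{\ell}$, each stored as a word, and output $g\neq_G e$ the moment some $\sigma_{g|_v}$ is nontrivial. The first key estimate is the total section length $L_\ell(g)=\sum_{v\in X^{\ell}}\lvert g|_v\rvert$. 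Writing $g|_v$ as the product of the sections of $s_1,\dots,s_n$ along $v$ and using that the action is a bijection of $X^{\ell}$, one gets $L_\ell(g)=\sum_{j=1}^{n}\alpha_{s_j}(\ell)$, where $\alpha_s(\ell)$ is the activity of the state $s$ at level $\ell$; for a polynomial automaton of degree $d$ this is $O(\ell^{d})$, so $L_\ell(g)=O(n\,\ell^{d})$.

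Next I would check that the transition from level $\ell$ to level $\ell+1$ — reading each section, applying the automaton transition letter by letter with the bijective relabeling of $X$, and concatenating the resulting sections — can be carried out on a multi-tape machine in time $O\!\left(L_\ell(g)+L_{\ell+1}(g)\right)=O(n\,\ell^{d})$, so that processing all levels up to a depth $D$ costs $\sum_{\ell=0}^{D}O(n\,\ell^{d})=O(n\,D^{d+1})$. Everything then reduces to a depth bound: a function $D=D(n)$ such that a nontrivial $g$ of length $n$ must already move some vertex at level $\le D$, equivalently, that vanishing of all top permutations down to level $D$ forces $g=_G e$. Granting $D=O(\log n)$ in the contracting case and $D=O((\log n)^{d+1})$ in general, the total running times become $O(n(\log n)^{d+1})$ and $O(n(\log n)^{(d+1)^2})$, which are exactly the claimed bounds.

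The main obstacle is the depth bound, and this is where the structure theory of polynomial automata enters. In the contracting case I would use the standard argument: the contraction coefficient $\lambda<1$ forces $\lvert g|_v\rvert$ below a fixed constant once $\lvert v\rvert=O(\log n)$, at which point the sections lie in the finite nucleus and triviality of all top permutations down to level $O(\log n)$ certifies $g=_G e$. The general case is harder because there is no contraction, so sections need not shrink, and I expect to argue by induction on the degree $d$, using the filtration of a degree-$d$ state by the degrees of its sections together with the Schreier-graph and orbit analysis of \cite{Bondarenko:Schreier}. The base case $d=0$ is a bounded automaton, which generates a contracting group, so $D=O(\log n)$. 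For the inductive step I would isolate the ``degree-$d$ core'' — the boundedly many rays along which sections retain degree $d$ — show that off this core the sections drop to degree $\le d-1$, and argue that certifying triviality of the degree-$d$ part requires iterating the degree-$(d-1)$ analysis across $O(\log n)$ scales, yielding the multiplicative recursion $D_d(n)=O\!\left(\log n\cdot D_{d-1}(n)\right)$ and hence $D_d(n)=O((\log n)^{d+1})$.

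The length bound $L_\ell(g)=O(n\,\ell^{d})$ and the linear-time multi-tape transition should be routine once the activity estimate $\alpha_s(\ell)=O(\ell^{d})$ is in hand; the bookkeeping — storing the section lists compactly, reusing tape between consecutive levels, and reading off top permutations while generating sections — is mechanical since $X$ is fixed. The real risk lies in the non-contracting depth bound: making precise the claim that a nontrivial degree-$d$ element of length $n$ cannot fix the tree down to level $(\log n)^{d+1}$, and in particular pinning the exact mechanism by which passing from degree $d-1$ to degree $d$ multiplies the required depth by a logarithmic factor rather than merely adding one. I expect this to be the most delicate step and the one most dependent on the fine combinatorics of polynomial automata.
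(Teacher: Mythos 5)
Your algorithmic skeleton is exactly the paper's: compute sections level by level on a multi-tape machine, delete trivial letters as you go, bound the total tape content at level $\ell$ by $n\cdot O(\ell^{d})$ via the polynomial activity count, bound the number of levels by $O((\log n)^{d+1})$ in general and $O(\log n)$ in the contracting case, and sum $\sum_{\ell} n\,O(\ell^{d})$ to get $O(n(\log n)^{(d+1)^2})$, resp.\ $O(n(\log n)^{d+1})$. The length estimate and the linear-time level transition are fine and match the paper (which also first passes to a power of the alphabet so that every simple cycle of the automaton becomes a loop --- a normalization you omit but would want).

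The genuine gap is the one you yourself flag: the depth bound in the non-contracting case. The paper does not prove it either; it imports it wholesale, citing Lemma~1* of \cite{Bondarenko:Schreier} for the statement that its machine halts after $O((\log n)^{d+1})$ stages. Your proposed substitute --- an induction on the degree with the recursion $D_d(n)=O(\log n\cdot D_{d-1}(n))$ --- is a plausible heuristic but is not an argument: you have not identified the mechanism that forces a nontrivial degree-$d$ element of length $n$ to act nontrivially within $(\log n)^{d+1}$ levels, and this is precisely the content of the cited lemma. Note also that your two formulations of the crux are not interchangeable for free. The paper's machine must handle words $w$ with $\pi_w=\varepsilon$ and $w|_x=w$ (after deleting $e$'s), which would make a naive section iteration loop forever; it does so by returning the empty word when the reduced section coincides with the input, which is correct only because the top permutation has already been checked. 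Your fixed-depth formulation (``run to level $D(n)$ and accept if nothing moved'') avoids the termination issue but pushes its entire weight onto the correctness of the depth bound, i.e.\ onto the claim that triviality of all top permutations down to level $D(n)$ certifies $g=_Ge$. Either you prove that claim, or you cite \cite{Bondarenko:Schreier} as the paper does; as written, the proposal does neither.
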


\section{Time complexity and the word problem}

We assume the reader is familiar with the standard notions of single-tape and multi-tapes Turing machines, their computation, and time complexity (see, for example, the textbook \cite{sipser13}).

Let $X$ be an alphabet, whose elements are called letters. The set of all words (strings) over $X$ (including the empty word $\varepsilon$) is denoted $X^{*}$. With the operation of concatenation, the set $X^{*}$ is a monoid with the identity $\varepsilon$. A language over $X$ is any subset of $X^{*}$. For a word $w\in X^{*}$, we write $|w|$ for the length of $w$, the number of letters in $w$. Let $X^n$ denote the set of words of length $n$.

Let $G$ be a finitely generated group and $S$ be a finite generating set. We will always assume that $S$ is closed under inversion, that is, $a^{-1}\in S$ for every $a\in S$. Then every element $g\in G$ can be represented by a word $w\in S^{*}$. The \textit{word problem} in $G$ with respect to $S$ is the language $\WP(G,S)=\{w\in S^{*} : w=_Ge\}$, where $e$ is the identity of $G$.

Let $C$ be a class of languages. If the property of $\WP(G,S)$ being in $C$ is independent on the choice of finite generating set $S$ of $G$, we can talk about $\WP_G$ or just $G$ being in $C$. The standard condition that implies such a property of $C$ is when $C$ is closed under inverse homomorphisms. If $C$ is closed under intersection with regular languages and inverse GSMs, then $C$ is closed under passing to finitely generated subgroups and finite index overgroups (see \cite{Holt:GroupLangAut}). Familiar classes of languages satisfy these conditions. The complexity classes $\DTIME_{*}(t(n))$ for $t(n)\geq n$ and $\DTIME_1(t(n))$ for $t(n)=\Omega(n^2)$ are closed with respect to all mentioned properties. However, I do not know if the classes $\DTIME_1(t(n))$ for $t(n)=o(n^2)$ are closed under inverse GSMs or even inverse homomorphisms. The reason is that replacing letters with words on a single tape requires shifting the tape, which could take $\Omega(n^2)$ time in the worst case. (Actually, it seems that $\DTIME_1(n\log n)$ is not closed under inverse homomorphisms.) Nevertheless, we can use the next statement.

\begin{proposition}\label{prop:time_WP}
Let $C$ be the class $\DTIME_1(t(n))$ for the function $t(n)\geq n$ such that $t(Cn)=O(t(n))$ for all $C>0$.
\begin{enumerate}
  \item If $S$ and $T$ are two finite generating sets of a group $G$, then $\WP(G,S)\in C$ if and only if $\WP(G,T)\in C$.
  \item If $H$ is a finitely generated subgroup of $G$ and $\WP_G\in C$, then $\WP_H\in C$.
  \item If $H$ is a subgroup of finite index in $G$, then $\WP_H\in C$ if and only if $\WP_G\in C$.
\end{enumerate}
\end{proposition}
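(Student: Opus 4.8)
The plan is to prove the three closure properties by exhibiting, for each, an explicit reduction that a single-tape machine can carry out within the allowed time budget, exploiting the hypothesis $t(Cn)=O(t(n))$ to absorb constant blow-ups in input length.

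\textbf{Proof sketch.} For part (1), suppose $\WP(G,T)\in C$ and let $S,T$ be two finite generating sets closed under inversion. Since both generate $G$, each $s\in S$ can be written as a fixed word $\phi(s)\in T^{*}$ of length at most some constant $L=\max_{s\in S}|\phi(s)|$, and moreover $w=_G e$ over $S$ if and only if $\phi(w)=_G e$ over $T$, where $\phi$ is the monoid homomorphism $S^{*}\to T^{*}$ extending $s\mapsto\phi(s)$. The key point is that $\phi$ is \emph{length-multiplying by a bounded factor}: $|\phi(w)|\le L|w|$. Unlike a general inverse-homomorphism reduction (which replaces letters on a single tape and may force $\Omega(n^2)$ shifting), here the reduction goes in the convenient direction: given $w\in S^{*}$ on the tape, I would first produce $\phi(w)$ by a single left-to-right sweep that outputs, for each letter read, the fixed block $\phi(s)$, writing the result to the tape. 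Since each letter expands to at most $L$ symbols and $L$ is a constant, this expansion can be performed in a streaming fashion in time $O(n)$ if we allow writing onto fresh tape cells to the right as we go — or, more carefully, we simulate the $C$-machine for $\WP(G,T)$ directly on the stream of expanded letters, so that no global shifting is ever required. The simulating machine runs in time $O\!\bigl(t(|\phi(w)|)\bigr)=O\!\bigl(t(Ln)\bigr)=O\!\bigl(t(n)\bigr)$ by the hypothesis $t(Cn)=O(t(n))$, giving $\WP(G,S)\in C$. By symmetry the converse holds, which establishes independence of the generating set.

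\textbf{Part (2).} Let $H\le G$ be finitely generated by a finite set $R$ (closed under inversion), and fix a finite generating set $S$ of $G$. Each $r\in R$ lies in $G$, so choose a fixed word $\psi(r)\in S^{*}$ with $\psi(r)=_G r$; extend to the homomorphism $\psi:R^{*}\to S^{*}$. Then for $u\in R^{*}$ we have $u=_H e$ if and only if $\psi(u)=_G e$, because $H$ sits inside $G$ and equality in $H$ is the restriction of equality in $G$. This is again a bounded-expansion reduction with $|\psi(u)|\le L'|u|$ for $L'=\max_{r}|\psi(r)|$, so the same streaming simulation of the given $C$-machine for $\WP(G,S)$ solves $\WP(H,R)$ in time $O(t(L'n))=O(t(n))$, whence $\WP_H\in C$.

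\textbf{Part (3).} The finite-index case is the delicate one and is where I expect the main obstacle. If $\WP_G\in C$ then $\WP_H\in C$ by part (2), since $H$ is in particular a finitely generated subgroup. The reverse implication is harder: from a decider for $H$ we must decide $G$. Using the Reidemeister–Schreier setup, pick a finite transversal $\{t_1,\dots,t_m\}$ for $H\backslash G$ (with $t_1=e$) and a generating set $S$ of $G$; each product $t_i s$ (for $s\in S$) can be rewritten as $h_{i,s}\,t_{j}$ with $h_{i,s}\in H$ a fixed element expressed as a bounded word over a fixed generating set $R$ of $H$. Given $w=s_1\cdots s_n\in S^{*}$, I would make one left-to-right pass tracking the current coset representative index $j$ (a finite state), and for each letter emit the corresponding Schreier factor $h_{i,s}\in R^{*}$; after the pass the accumulated word $\tilde w\in R^{*}$ satisfies $w=_G \tilde w\cdot t_{j_{\mathrm{final}}}$, so $w=_G e$ exactly when $j_{\mathrm{final}}=1$ and $\tilde w=_H e$. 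Crucially $|\tilde w|\le L''n$ with $L''$ the maximum length of the finitely many Schreier factors, so this is once more a constant-factor length expansion implementable as a streaming reduction feeding the $C$-machine for $\WP_H$; the final time bound is $O(t(L''n))=O(t(n))$. The obstacle to watch is that these reductions must all be realized \emph{without global tape shifting}: the honest way to justify the single-tape bound is to compose the constant-memory transducer that produces the expanded stream directly with the hypothesized single-tape decider, rather than first writing the expanded word and then running the decider. Making this composition precise for single-tape machines — and confirming that the $t(Cn)=O(t(n))$ hypothesis is exactly what licenses the length blow-up — is the technical heart of the argument, but it is routine once the streaming viewpoint is adopted.
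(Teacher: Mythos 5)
Your proposal follows essentially the same route as the paper: reduce via a letter-to-bounded-word substitution (a monoid homomorphism for items 1 and 2, a coset-tracking Schreier transducer for item 3), observe that the image has length at most a constant multiple of the input, and invoke $t(Cn)=O(t(n))$ to absorb the blow-up. The paper handles item 2 slightly differently (by item 1 one may take $S\supseteq R$ and then write $\WP(H,R)=\WP(G,S)\cap R^{*}$, using closure under intersection with regular languages), but your direct homomorphic reduction is equally valid. The one place where your write-up is not merely unfinished but actively misleading is the step you yourself flag as the technical heart: a ``streaming'' composition of the transducer with the decider cannot be taken literally, because the hypothesized single-tape machine for $\WP(G,T)$ is a two-way machine that revisits and rewrites its tape arbitrarily, so it does not consume a one-pass stream; and physically materializing the expanded word on a single tape costs $\Omega(n^2)$, as you note. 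The correct (and standard) device, which the paper uses, is an alphabet blow-up: the simulating machine $M'$ works over a tape alphabet whose symbols are words in $S^{*}$ of length at most $k$ together with an optional marker recording the simulated head's position inside the block, the input letter $a$ being reinterpreted as the block $\tau(a)$. Then each step of $M$ on $\tau(w)$ corresponds to one step of $M'$ on $w$, no shifting ever occurs, and the running time is $t(kn)=O(t(n))$. With that mechanism substituted for the streaming picture, your argument goes through for all three items.
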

\begin{proof}
The proof basically repeats Propositions 3.4.3, 3.4.5, 3.4.7 in \cite{Holt:GroupLangAut} but in terms of Turing machines with a single tape. We write details only for item 1. Let $M$ be a Turing machine solving $\WP(G,S)$ in time $t(n)$. For every $a\in T$, fix a nonempty word $\tau(a)\in S^{*}$ such that $a=_G\tau(a)$. We get a monoid homomorphism $\tau:T^{*}\rightarrow S^{*}$, $\tau(a_1\ldots a_n)=\tau(a_1)\ldots\tau(a_n)$, so that $w\in\WP(G,T)$ if and only if $\tau(w)\in\WP(G,S)$. One can construct a Turing machine $M'$ that, given a word $w\in T^{*}$, simulates $M$ on $\tau(w)$ without calculating $\tau(w)$. The tape alphabet for the machine $M'$ consists of words $w\in S^{*}$ of length $\leq k$, where $k$ is the maximal length of $\tau(a)$ for $a\in T$, together with a possible mark on one of the letters indicating the current head position. Each input letter $a\in T$ is interpreted as the working symbol $\tau(a)$. The calculation of $M'$ on a symbol $v$ simulates the calculation $M(v)$. Each step of $M'(w)$ corresponds to a step of $M(\tau(w))$. Therefore, the running time of $M'$ is bounded by $t(kn)=O(t(n))$.

The item 2 follows from item 1 together with the fact that the class $C$ is closed under intersection and union with regular languages.

The item 3 repeats item 1 with $\tau$ replaced with the generalised sequential machine that, given a word over generators of $G$, computes its coset representative and a word over generators of $H$.
\end{proof}

\section{The word problem and growth of groups}

Let $G$ be a group generated by a finite set $S$ closed under inversion. The \textit{word length} $l_S(g)$ of an element $g\in G$ is the length of the shortest word $w\in S^{*}$ such that $w=_Gg$. The \textit{growth function} $\gamma_S(n)$ is equal to the number of elements $g\in G$ such that $l_S(g)\leq n$. If $T$ is another finite generating set of $G$, then $\gamma_T(n)\leq \gamma_S(kn)\leq \gamma_S(n)^k$ for some $k\geq 1$ and $\log \gamma_T(n)=O( \log \gamma_S(n) )$.

\begin{theorem}
Let $G$ be a finitely generated group with the growth function $\gamma(n)$. Then $\WP_G\not\in \DTIME_1(o(n\log\gamma(n)))$.
\end{theorem}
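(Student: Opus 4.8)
\noindent\emph{Proposal.} The plan is to take an arbitrary single-tape machine $M$ deciding $\WP(G,S)$ in time $T(n)$ and to show $T(n)\neq o(n\log\gamma(n))$ (equivalently $\WP_G\notin\DTIME_1(o(n\log\gamma(n)))$) by the crossing-sequence method. Recall that for a boundary $b$ between two tape cells, the crossing sequence $C(x,b)$ of $M$ on input $x$ records the states in which the head crosses $b$, that $\sum_b|C(x,b)|\le T(|x|)$, and that the number of crossing sequences of length $\le\ell$ is at most $c^{\ell+O(1)}$ for a constant $c=c(M)$. The one interface I need between the machine and the group is the splicing fact: if two \emph{accepted} inputs $x=x_Lx_R$ and $y=y_Ly_R$ are split at the same boundary $b$ and $C(x,b)=C(y,b)$, then the glued word $x_Ly_R$ is again accepted, since the two one-sided computations are compatible and combine into an accepting computation. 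I would record this first.

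Next I would build a family of accepted inputs with many useful boundaries. Fix $n$, a block length $r$, and $k=\lfloor n/r\rfloor$. For each $g\in B_r$ fix a representative word $w_g$, normalised to a common length (absorbing the parity issue into a constant-length gadget), and to a tuple $(g_1,\dots,g_k)\in B_r^{\,k}$ associate $W=w_{g_1}\cdots w_{g_k}\,z$, where $z$ is a fixed representative of $(g_1\cdots g_k)^{-1}$; then $W=_Ge$ and $|W|=\Theta(n)$. I would look at the boundaries $b_i$ at the block junctions. Writing $p_i=g_1\cdots g_i$, the splicing fact yields the key separation: if two tuples agree in the blocks after position $i$ but have $p_i\neq p_i'$, the spliced word is $w_{g_1}\cdots w_{g_i}w_{g_{i+1}}\cdots w_{g_k}z'$, which represents $p_i(p_i')^{-1}\neq e$ and is rejected; hence the two inputs already differ in their crossing sequence at $b_i$. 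Thus, with the later blocks fixed, distinct values of $p_i$ force distinct crossing sequences at $b_i$, and since $B_r^{\,i}=B_{ri}$ the partial product $p_i$ ranges over all $\gamma(ri)$ elements of $B_{ri}$.

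I would then extract the time bound by averaging over the uniform distribution on $(g_1,\dots,g_k)$. Conditioning on the blocks after position $i$, the crossing sequence at $b_i$ is an injective function of $p_i$, so its entropy is at least $H(p_i)$; comparing entropy with expected length (again using the $c^{\ell+O(1)}$ count) bounds the average length of $C(\cdot,b_i)$ below by a constant multiple of $H(p_i)/\log c$. Summing over the $k$ junctions, the average of $\sum_i|C(\cdot,b_i)|$, hence the average running time, is at least a constant times $\sum_{i\le k}H(p_i)$. A suitable choice of $r$ together with $\sum_{i\le k}\log\gamma(ri)\asymp k\log\gamma(rk)$ and $\log\gamma(\Theta(n))\asymp\log\gamma(n)$ should turn this into $\Omega(n\log\gamma(n))$, so some input of length $\Theta(n)$ needs that much time, for infinitely many $n$.

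The hard part will be precisely this last step: the splicing fact only forces crossing sequences to separate group \emph{elements}, so the honest lower bound at $b_i$ is the entropy $H(p_i)$ rather than the full $\log\gamma(ri)$, and under the uniform measure $p_i$ can concentrate and need not occupy a constant fraction of $B_{ri}$. The crux is therefore to feed in a distribution (or, in an incompressibility formulation, a single sufficiently generic tuple) for which the prefix element genuinely carries $\Theta(\log\gamma(ri))$ bits at a linear number of scales $ri$; this is where the geometry of $G$ enters and where I expect the real work to lie, the remainder being routine crossing-sequence bookkeeping and the verification that normalising representative lengths and passing between $\gamma(\Theta(n))$ and $\gamma(n)$ cost only constants.
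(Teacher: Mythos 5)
Your setup (crossing sequences, the counting bound $q^{\ell+O(1)}$, and the splicing fact) matches the paper's, and your computation that the spliced word represents $p_i(p_i')^{-1}$ is correct. But the gap you flag at the end is not a technicality to be deferred --- it is fatal to the block-product design. Your final bound is $T\gtrsim\sum_{i\le k}H(p_i)$, where $p_i$ is the position at time $i$ of a random walk whose step distribution is supported on a ball of radius $r$. For amenable groups of exponential growth such as the lamplighter group $\mathbb{Z}_2\wr\mathbb{Z}$, every finitely supported symmetric step distribution has zero asymptotic entropy (Kaimanovich--Vershik), so $H(p_i)=o(i)$ and $\sum_{i\le k}H(p_i)=o(k^2)$, while the theorem demands $\Omega(n\log\gamma(n))=\Omega(n^2)$. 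Increasing $r$ does not help, since $H(p_i)\le i\log|B_r|$ caps the sum at $O(n^2/r)$. Switching to a single incompressible tuple faces the same obstruction: what must carry $\Theta(\log\gamma(ri))$ bits is the prefix \emph{element} $p_i$, not the tuple, and no product measure (or generic tuple) guarantees that the prefix elements are spread over the ball at linearly many scales. So the ``real work'' you postpone cannot be done in this formulation.

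The paper sidesteps the issue by not accumulating information across many boundaries at all. It takes the mirrored words $x=w(aa^{-1})^{2n-|w|}w^{-1}$ of common length $4n$, with $w$ ranging over $\gamma(n)/2$ distinct nontrivial elements. The splicing fact then shows that for $x\neq y$ the crossing sequences differ at \emph{every} pair of even positions in the middle padding zone ($n<i,j<3n$), because the splice equals $w_1w_2^{-1}\neq_G e$. Hence each $x$ admits some middle boundary where its crossing sequence has length at most $T(4n)/n$, these short sequences are pairwise distinct, and counting gives $\tfrac12\gamma(n)\le q^{T(4n)/n+1}$, i.e.\ $T(4n)=\Omega(n\log\gamma(n))$. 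The point is that a single crossing sequence at one boundary already determines the full group element $w$ (worth $\log\gamma(n)$ bits), so no entropy of prefix products ever enters. If you want to salvage your write-up, replace the $k$-block construction by this one-block-and-its-inverse construction; the rest of your crossing-sequence bookkeeping then goes through essentially verbatim.
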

\begin{proof}
Let $M$ be a single-tape Turing machine that solves $\WP(G,S)$ in time $T(n)$. We will show that $T(n)=\Omega(n\log\gamma(n))$.

We use the standard method of crossing sequences. The \textit{crossing sequence} $C_i(w)$ on input $w$ at position $i$ is the sequence of states that $M$ is in when its head crosses the boundary between the $i$th and $(i+1)$th tape cells (in either direction) during the computation $M(w)$. Note that the total number of crossing sequences of length at most $t$ is bounded by $q^{t+1}$, where $q$ is the number of $M$ states.

Let $B_n\subset S^{*}$ be the set of words of even length at most $n$ that represent different non-trivial group elements. Note that $\frac{1}{2}\gamma(n)\leq |B_n|\leq \gamma(n)$. Fix $a\in S$ and put
\[
L_n=\{w(aa^{-1})^kw^{-1}: w\in B_{n} \mbox{ and $k=2n-|w|$}  \}\subset\WP(G,S).
\]
Note that $|L_n|=|B_n|$ and the words in $L_n$ have length $4n$.

Claim: If $x,y\in L_n$ and $x\neq y$, then $C_i(x)\neq C_j(y)$ for any even $i,j$ with $n<i,j<3n$. Indeed, suppose $C_i(x)=C_j(y)$ for some even $n<i,j<3n$. Let $x'$ be the prefix of $x$ of length $i$, and let $y'$ be the suffix of $y$ of length $4n-j$. Then the computation of $M$ on input $x'y'$ coincides with the computation $M(x)$ on the left side of the tape from the position $i$ and with the computation $M(y)$ on the right side of the tape from the position $j$ (see more details in \cite[Theorem 1]{Hennie}). Therefore, the computation of $M(x'y')$ accepts: the accepting step of either $M(x)$ or $M(y)$ has a corresponding step of $M(x'y')$. However, $x'=w_1(aa^{-1})^{k_1}$ and $y'=(aa^{-1})^{k_2}w_2^{-1}$ for different $w_1,w_2\in B_n$, and therefore $x'y'=_Gw_1w_2^{-1}\neq_G e$. Contradiction.

Each element of each crossing sequence corresponds to a move of $M$. Since the running time of $M(x)$ for each $x\in L_n$ is at most $T(4n)$, there should exist an even position $i$ (depending on $x$) between $n$ and $3n$ such that the crossing sequence $C_i(x)$ has length at most $\frac{1}{n}T(4n)$. These crossing sequences are different for different $x\in L_n$ by our claim. Therefore,\vspace{-0.5cm}
\begin{align*}
\frac{1}{2}\gamma(n) \leq |B_n|\leq q^{\frac{1}{n}T(4n)+1} \quad \Rightarrow \quad T(4n)\geq n\log \gamma(n).
\end{align*}
\end{proof}

\begin{corollary}
If $G$ is a finitely generated group of exponential growth, then $\WP_G\not\in \DTIME_1(o(n^2))$.
\end{corollary}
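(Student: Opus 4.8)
The plan is to derive this directly from the preceding theorem, which already gives $\WP_G\not\in\DTIME_1(o(n\log\gamma(n)))$ for every finitely generated group. The only work is to translate the hypothesis of exponential growth into a lower bound on $n\log\gamma(n)$ and then match up the two little-$o$ classes.

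First I would unpack the definition of exponential growth. The growth function is submultiplicative, $\gamma(m+n)\leq\gamma(m)\gamma(n)$, so the limit $\lambda=\lim_n\gamma(n)^{1/n}$ exists by Fekete's lemma, and exponential growth means precisely $\lambda>1$. Equivalently, there is a constant $c>0$ with $\log\gamma(n)\geq cn$ for all sufficiently large $n$. (Independence from the generating set is harmless here, since changing $S$ only rescales $\log\gamma$ by a bounded factor, as recorded at the start of the section.) From this I get $n\log\gamma(n)\geq c\,n^2$ for all large $n$, that is, $n\log\gamma(n)=\Omega(n^2)$.

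Next I would compare the function classes. If $t(n)=o(n^2)$, then
\[
\frac{t(n)}{n\log\gamma(n)}\;\leq\;\frac{t(n)}{c\,n^2}\;\longrightarrow\;0,
\]
so $t(n)=o(n\log\gamma(n))$ as well, whence $\DTIME_1(t(n))\subseteq\DTIME_1(o(n\log\gamma(n)))$. Since the theorem asserts $\WP_G\not\in\DTIME_1(o(n\log\gamma(n)))$, it follows that $\WP_G\not\in\DTIME_1(t(n))$ for every $t(n)=o(n^2)$, i.e. $\WP_G\not\in\DTIME_1(o(n^2))$.

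There is no real obstacle: the substantive content is entirely contained in the crossing-sequence argument of the previous theorem. The only point demanding a moment's care is the asymptotic bookkeeping, namely confirming that exponential growth yields the \emph{linear} lower bound $\log\gamma(n)=\Omega(n)$ rather than something weaker, which is exactly what submultiplicativity guarantees.
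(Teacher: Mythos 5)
Your proof is correct and follows exactly the route the paper intends: the corollary is stated without proof precisely because it is the immediate specialization of the theorem once one notes that exponential growth gives $\log\gamma(n)=\Omega(n)$ and hence $o(n^2)\subseteq o(n\log\gamma(n))$. Your care about extracting the linear lower bound on $\log\gamma(n)$ via submultiplicativity is exactly the (small) point that needs checking, and you got it right.
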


\begin{corollary}
If $G$ is a finitely generated group that is not virtually nilpotent, then $\WP_G\not\in \DTIME_1(n\log n)$.
\end{corollary}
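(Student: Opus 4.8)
The plan is to prove the contrapositive and feed the result into Gromov's polynomial growth theorem. Thus I would assume $\WP_G\in\DTIME_1(n\log n)$ and aim to show that $G$ has polynomial growth; Gromov's theorem then forces $G$ to be virtually nilpotent, which is exactly the contrapositive of the claim.

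First I would fix a single-tape Turing machine solving $\WP(G,S)$ in time $T(n)=O(n\log n)$. The mechanism is the inequality produced inside the proof of the preceding theorem: for any machine with running time $T$ one has $T(4n)\ge n\log\gamma(n)$, and this holds for \emph{every} $n$, since the crossing-sequence count was performed separately at each scale $n$. Substituting $T(4n)=O(n\log n)$ gives $n\log\gamma(n)=O(n\log n)$, hence $\log\gamma(n)=O(\log n)$, i.e. $\gamma(n)\le n^{C}$ for some constant $C$ and all large $n$. So $G$ has polynomial growth, and Gromov's theorem completes the argument.

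The step that deserves care --- and the reason I would route through the all-$n$ inequality rather than the black-box statement $\WP_G\notin\DTIME_1(o(n\log\gamma(n)))$ --- is the passage from ``super-polynomial growth'' to the comparison $n\log n=o(n\log\gamma(n))$. The latter asks for $\log\gamma(n)/\log n\to\infty$, whereas failure of polynomial growth only yields $\limsup \log\gamma(n)/\log n=\infty$ outright; upgrading $\limsup$ to $\lim$ would need the Shalom--Tao gap $\gamma(n)\succeq n^{(\log\log n)^{c}}$. The pointwise bound $T(4n)\ge n\log\gamma(n)$ sidesteps this issue entirely, so the only deep ingredient is Gromov's theorem --- which is also where the real difficulty lives; the corollary itself is a short deduction.
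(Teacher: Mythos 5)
Your proof is correct, and it uses the same two ingredients as the paper (the crossing-sequence lower bound of the preceding theorem and Gromov's polynomial growth theorem), but it is organized differently, and the difference matters. The paper argues forward: not virtually nilpotent $\Rightarrow$ (Gromov) growth not polynomially bounded $\Rightarrow$ $\log n=o(\log\gamma(n))$ $\Rightarrow$ apply the theorem in its stated asymptotic form. The middle implication is exactly the point you flag: for a general monotone function, failure of a polynomial bound only gives $\limsup\log\gamma(n)/\log n=\infty$, and upgrading this to a genuine $o(\cdot)$ statement for group growth functions requires the van den Dries--Wilkie strengthening of Gromov's theorem (weakly polynomial growth already implies polynomial growth) or the Shalom--Tao effective version; submultiplicativity of $\gamma$ alone does not suffice. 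Your contrapositive route through the pointwise inequality $T(4n)\geq n\log\gamma(n)$, valid for every $n$ since the counting argument is carried out at each scale separately, yields $\log\gamma(n)=O(\log n)$ directly and then invokes only the standard form of Gromov's theorem. So your version is logically equivalent in conclusion but strictly more economical in its inputs, and it quietly repairs a step the paper passes over in one clause. The only cosmetic caveat is that the displayed inequality in the paper suppresses constants (the honest bound is $T(4n)\geq n(\log_q(\gamma(n)/2)-1)$), but this does not affect your deduction that $\gamma(n)\leq n^{C}$ for large $n$.
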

\begin{proof}
By the Gromov's celebrated theorem, finitely generated virtually nilpotent groups are exactly groups of polynomial growth. Moreover, if the growth function $\gamma(n)$ is not bounded by a polynomial, then $\log n =o(\log \gamma(n))$, and we can apply the previous theorem.
\end{proof}

\begin{corollary}
Let $G$ be a finitely generated group with the growth function $\gamma_G(n)\succcurlyeq 2^{n^{\alpha}}$ for $\alpha\in(0,1]$. Then $\WP_G\not\in \DTIME_1(o(n^{1+\alpha}))$.
\end{corollary}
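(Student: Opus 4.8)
The plan is to derive this directly from the preceding theorem by translating the growth hypothesis into a lower bound on $n\log\gamma(n)$. First I would unpack the meaning of $\gamma_G(n)\succcurlyeq 2^{n^\alpha}$: by the standard domination order on growth functions, this means there is a constant $C\geq 1$ with $2^{n^\alpha}\leq \gamma_G(Cn)$ for all sufficiently large $n$. Taking logarithms gives $\log\gamma_G(Cn)\geq n^\alpha$, and substituting $m=Cn$ yields $\log\gamma_G(m)\geq (m/C)^\alpha = C^{-\alpha}m^\alpha$. Hence $\log\gamma_G(n)=\Omega(n^\alpha)$, and therefore $n\log\gamma_G(n)=\Omega(n^{1+\alpha})$.

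Next I would feed this into the previous theorem, which asserts $\WP_G\notin\DTIME_1(o(n\log\gamma(n)))$. Because $n^{1+\alpha}=O(n\log\gamma(n))$, any function $f(n)=o(n^{1+\alpha})$ also satisfies $f(n)=o(n\log\gamma(n))$: writing the ratio $f(n)/(n\log\gamma(n))$ as the product of $f(n)/n^{1+\alpha}$ and $n^{1+\alpha}/(n\log\gamma(n))$, the first factor tends to $0$ and the second stays bounded. Consequently $\DTIME_1(o(n^{1+\alpha}))\subseteq\DTIME_1(o(n\log\gamma(n)))$, so the theorem's conclusion immediately gives $\WP_G\notin\DTIME_1(o(n^{1+\alpha}))$.

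The only subtle point—rather than a genuine obstacle—is the book-keeping with the domination constant $C$: one must check that passing from $\gamma_G$ to $\log\gamma_G$ converts the multiplicative rescaling $n\mapsto Cn$ into the harmless constant factor $C^{-\alpha}$, so that the exponent $\alpha$ of the polynomial lower bound on $\log\gamma_G$ is unaffected. This is exactly why the theorem is phrased in terms of $\log\gamma$ rather than $\gamma$ itself, and it is also what keeps the conclusion independent of the chosen generating set, since $\log\gamma_T(n)=O(\log\gamma_S(n))$. With these observations in place the corollary follows in a few lines and requires no new machinery beyond the previous theorem.
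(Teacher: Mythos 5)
Your argument is correct and is exactly the intended derivation: the paper states this corollary without proof as an immediate consequence of the theorem $\WP_G\not\in\DTIME_1(o(n\log\gamma(n)))$, and your unpacking of $\succcurlyeq$ to get $\log\gamma(n)=\Omega(n^\alpha)$ and hence $o(n^{1+\alpha})\subseteq o(n\log\gamma(n))$ is the right bookkeeping.
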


\section{The word problem in virtually nilpotent groups}

In this section, we prove that the word problem in a finitely generated virtually nilpotent group $G$ is solvable in time $O(n\log n)$ by a single-tape Turing machine. The idea is analogous to the solution of the word problem in the group $\mathbb{Z}$: check that the input word $w$ represents an even number $n$, compute a word for $n/2$ in linear time by  crossing every second appearance of a generator, repeat. The role of division by two will be played by the inverse to an expanding endomorphism.

\begin{theorem}
Let $G$ be a finitely generated virtually nilpotent group. Then $\WP_G\in \DTIME_1(n\log n)$.
\end{theorem}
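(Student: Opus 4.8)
The plan is to reduce the general virtually nilpotent case to a single model group and then exploit an expanding endomorphism, exactly as the paragraph preceding the theorem suggests.

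\medskip

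\noindent\textbf{Reduction to a clean model group.}
By Proposition~\ref{prop:time_WP}, the property $\WP_G\in\DTIME_1(n\log n)$ is invariant under passing to finite-index subgroups and overgroups and under finitely generated subgroups (the function $t(n)=n\log n$ satisfies $t(Cn)=O(t(n))$). A finitely generated virtually nilpotent group contains a finite-index finitely generated torsion-free nilpotent subgroup, so by item~3 I may assume $G$ is finitely generated torsion-free nilpotent. Every such $G$ embeds as a subgroup of $UT_m(\mathbb{Z})$ for some $m$ (Jennings/Mal'cev), and $UT_m(\mathbb{Z})$ is itself finitely generated; by item~2 it suffices to prove the statement for $G=UT_m(\mathbb{Z})$. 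This is where I would invoke \cite{GoodmanShapiro}: there is an \emph{expanding endomorphism} $\varphi\colon UT_m(\mathbb{Z})\to UT_m(\mathbb{Z})$, i.e.\ an injective endomorphism with finite-index image whose geometric effect is to scale word length by a constant factor $>1$.

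\medskip

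\noindent\textbf{The halving step via the inverse of $\varphi$.}
Fix a generating set $S$ of $UT_m(\mathbb{Z})$. Let $\lambda>1$ be the expansion constant, so that $l_S(\varphi(g))\geq \lambda\, l_S(g)$ up to additive constants; equivalently, if $h$ lies in the image of $\varphi$, then $l_S(\varphi^{-1}(h))\leq \tfrac{1}{\lambda} l_S(h)+O(1)$. The algorithm mimics the $\mathbb{Z}$ case: on input $w\in S^{*}$ of length $n$, in a single left-to-right pass I check whether the group element $w$ lies in the finite-index subgroup $\varphi(G)$ (this is a coset condition, decidable by tracking which coset $w$ represents among the finitely many cosets of $\varphi(G)$ using a finite-state control), and simultaneously rewrite $w$ into a word $w'$ representing $\varphi^{-1}(w)$ of length at most $\tfrac{1}{\lambda}n+O(1)$. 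If at some stage $w$ is not in $\varphi(G)$ but is also not the identity coset representative, reject; if $w$ reduces to the empty word, accept. Iterating, the length shrinks geometrically, so the number of rounds is $O(\log n)$, each round costing $O(\text{current length})$ time, giving total time $O(n\log n)$.

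\medskip

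\noindent\textbf{The main obstacle: realizing one halving round in linear time on a single tape.}
The delicate point is that computing a word for $\varphi^{-1}(w)$ — and contracting the tape so that the next round really operates on a word of length $\approx n/\lambda$ — must be done in time $O(n)$ per pass, \emph{without} the quadratic cost of shifting the tape. In the $\mathbb{Z}$ model one ``crosses out every second generator'' in place, which is genuinely a local, bounded-memory rewriting. For $UT_m(\mathbb{Z})$, the map $\varphi^{-1}$ is not letter-local in general: writing a word for the preimage may require normalizing commutators and carrying information about higher-weight coordinates. The key technical content is therefore to show that $\varphi$ can be chosen (or $S$ arranged) so that checking membership in $\varphi(G)$ and producing the preimage word is a \emph{finite-state, single-pass} operation whose output is shorter by a constant factor, and that the repeated contraction can be implemented so the head does not have to re-traverse already-processed blank tape — so that the total work over all $O(\log n)$ rounds telescopes to $\sum_k O(n/\lambda^k)=O(n)$ per unit of the logarithmic factor, i.e.\ $O(n\log n)$ overall. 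Handling the additive $O(1)$ errors and the bookkeeping of cosets of $\varphi(G)$ carefully, so they do not accumulate, is the part I expect to require the most care.
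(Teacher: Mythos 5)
Your high-level strategy matches the paper's: reduce via Proposition~\ref{prop:time_WP} to a finitely generated nilpotent group admitting an expanding endomorphism $\varphi$ (the Goodman--Shapiro input), then iterate ``check the coset of $\varphi(G)$, pass to $\varphi^{-1}$'' for $O(\log n)$ rounds. But you have correctly located, and then left open, the one step that carries all the content: how a \emph{single-tape} machine performs one round in $O(n)$ time. Worse, the mechanism you sketch for it cannot work. You propose to physically contract the tape so that round $k$ operates on a word of length about $n/\lambda^k$, and you compute the total work as $\sum_k O(n/\lambda^k)$. That sum is $O(n)$, not $O(n\log n)$; if this were achievable, Kobayashi's gap theorem would place $\WP_G$ in $\DTIME_1(n)=\REG$, forcing $G$ to be finite by Anisimov's theorem. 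The obstruction is exactly the one flagged earlier in the paper: compacting a word whose surviving letters must move by up to $\Theta(n)$ positions cannot be done by a single head with finite control in one linear-time pass.

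The missing idea in the paper's proof is to \emph{never contract the tape}. One enlarges the generating set to $N=\{g: l_S(g)\le R\}$, where $R$ bounds the lengths of a fixed set $X$ of coset representatives of $H=\varphi(G)$, and replaces $\varphi$ by a power so that the expansion constant satisfies $C\ge 4$. Then for consecutive non-identity letters $a,b\in N$ and current coset state $x\in X$ there is a unique $y\in X$ with $xaby^{-1}\in H$, and the estimate $l_S(\varphi^{-1}(xaby^{-1}))\le \tfrac{1}{C}\cdot 4R\le R$ shows the preimage is again a \emph{single letter} $c\in N$. The rewriting rule $(a,b,x)\mapsto(e,c,y)$ is therefore strictly local and in place: $a$ is overwritten by the padding letter $e$, $b$ by $c$, and $y$ is carried in the finite control. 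This simultaneously answers your worry about $\varphi^{-1}$ not being letter-local (the blown-up alphabet absorbs all the commutator bookkeeping and the additive $O(1)$ errors) and resolves the tape-shifting problem (the word keeps length $n$ forever; only the number of non-$e$ letters halves, so each of the $O(\log n)$ passes costs $O(n)$, giving $O(n\log n)$ exactly). Without this device, or an equivalent one, your argument does not close.
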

\begin{proof}
Every finitely generated virtually nilpotent group contains a torsion-free nilpotent subgroup of finite index. Every finitely generated torsion-free nilpotent group embeds into a finitely generated nilpotent group that admits an expanding endomorphism (see \cite[Section~4]{GoodmanShapiro}). Therefore, in view of Proposition~\ref{prop:time_WP}, we may assume that $G=\langle S\rangle$ is a finitely generated group that admits an expanding endomorphism $\phi$, that is, $H=\phi(G)$ has finite index in $G$ and there exists a constant $C>1$ such that $l_S(\phi(g))\geq Cl_S(g)$ for all $g\in G$. By taking a suitable power of $\phi$, we may assume that $C\geq 4$. Note that $\phi:G\rightarrow H$ is an isomorphism, and we can talk about its inverse $\phi^{-1}:H\rightarrow G$.


Let $X$ be a set of right coset representatives for $H$ in $G$ containing the identity $e$. Let $R$ be the largest length $l_S(x)$ for $x\in X$. Let $N\subset G$ consist of elements with $l_S(g)\leq R$. Then $N$ is a finite generating set of $G$ closed under inversion that has the following property: for every $a,b\in N$ and $x\in X$ there exists a unique $y\in X$ such that $xaby^{-1}\in H$ and $\phi^{-1}(xaby^{-1})=c\in N$, where the last equation follows from the inequalities
\[
l_S(\phi^{-1}(xaby^{-1}))\leq \tfrac{1}{C} l_S(xaby^{-1})\leq \tfrac{1}{C} 4R\leq R.
\]
We can view this property as the rewriting rule $(a,b,x)\mapsto(e,c,y)$ that replaces a pair of letters $ab$ by the pair $ec$ (here $x,y$ could be stored in memory/states).

Given a word $w\in N^{*}$, we have $w=_Ge$ if and only if $w$ represents an element of $H$ and $\phi^{-1}(w)=_Ge$. We check whether $w\in H$, compute $w'\in N^{*}$ such that $w'=_G\phi^{-1}(w)$ by using our rewriting rules, and repeat the process with $w$ replaced by $w'$. Each iteration reduces the number of non-identity letters in $w$ in half. A Turing machine $M$ with a single tape over the alphabet $N$ implementing this process could be realized as follows:
\begin{enumerate}
  \item If the input tape word $w\in N^{*}$ contains only the letters $e$, accept.
  \item Scan the input tape word $w$ and compute its coset representative $x\in X$ in $H$ using the states of $M$. If $x\neq e$, reject. Otherwise, return to the beginning of the tape.
  \item Scan the input tape word $w$ skipping the letter $e$. At each step, the state is labeled by a letter $x\in X$ representing the current coset representative (initially $x=e$). For a pair of two consecutive symbols $a,b\in N\setminus\{e\}$, use the rule $(a,b,x)\mapsto (e,c,y)$: replace $a$ by $e$, replace $b$ by $c$, and move to the state of $M$ labeled with $y\in X$ (the coset representative of $xab$). We get a word $w'\in N^{*}$ of length $|w'|=|w|$ and $w'=_G\phi^{-1}(w)$.
\begin{align*}
w&=\boxchar{a_1}\boxchar{b_1}\boxchar{a_2}\boxchar{b_2}\boxchar{\ldots} \boxchar{a_k}\boxchar{b_k} &&=_G (a_1b_1y_1^{-1})(y_1a_2b_2y_2^{-1}) y_2\ldots y_{k-1}a_kb_k\\
w'&=\boxchar{e}\boxchar{c_1}\boxchar{e}\boxchar{c_2}\boxchar{\ldots} \boxchar{e}\boxchar{c_k} &&=_G \phi^{-1}(a_1b_1y_1^{-1})\phi^{-1}(y_1a_2b_2y_2^{-1})\ldots \phi^{-1}(y_{k-1}a_kb_k)
\end{align*}

  \item Return to the beginning of the tape and go to item 1.
\end{enumerate}
The items $1-4$ are performed in $O(n)$ steps and reduce the number of non-identity letters in an input word in half. Therefore, the computation finishes after $O(\log n)$ stages, and the running time of $M$ is $O(n\log n)$.
\end{proof}

By intersecting $\DTIME_1(n\log n)$ with context-free languages, we can separate virtually abelian groups. Let $k{\text -}\CFL$ be the class of languages that are intersections of exactly $k$ context-free languages.

\begin{corollary}
Let $G$ be a finitely generated group. Then $\WP_G\in \DTIME_1(n\log n)\cap k{\text -}\CFL$ if and only if $G$ is virtually abelian of rank at most $k$.
\end{corollary}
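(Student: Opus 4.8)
The plan is to combine the characterization of $\DTIME_1(n\log n)$ proved just above (membership is equivalent to virtual nilpotence) with Brough's theory of poly-context-free groups, i.e.\ groups whose word problem is an intersection of finitely many context-free languages. First I would check that $k{\text -}\CFL$ behaves like the classes in Proposition~\ref{prop:time_WP}: since $\CFL$ is a full trio, it is closed under inverse homomorphisms, inverse GSM mappings and intersection with regular languages, and because all of these operations are given by (partial) functions, whose inverse images commute with finite intersections, the same closures hold for $k{\text -}\CFL$. Hence membership $\WP_G\in k{\text -}\CFL$ is independent of the finite generating set and passes to finitely generated subgroups and finite-index overgroups, exactly as in Proposition~\ref{prop:time_WP} (cf.\ \cite{Holt:GroupLangAut}); I will also use that $m{\text -}\CFL\subseteq k{\text -}\CFL$ for $m\le k$, by intersecting with copies of the regular language $S^{*}$.

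For the implication from the group side, suppose $G$ is virtually abelian of rank $m\le k$, so $G$ has a finite-index subgroup isomorphic to $\mathbb{Z}^m$. Being virtually nilpotent, $G$ satisfies $\WP_G\in\DTIME_1(n\log n)$ by the theorem above. For the context-free half it suffices, by closure under finite-index overgroups, to place $\WP_{\mathbb{Z}^m}$ in $m{\text -}\CFL$: taking generators $a_1^{\pm1},\dots,a_m^{\pm1}$, the language $L_i$ of words in which $a_i$ and $a_i^{-1}$ occur equally often is context-free (a pushdown automaton can track the running difference of their numbers of occurrences), and $\WP_{\mathbb{Z}^m}=\bigcap_{i=1}^m L_i$; thus $\WP_{\mathbb{Z}^m}\in m{\text -}\CFL\subseteq k{\text -}\CFL$.

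Conversely, assume $\WP_G\in\DTIME_1(n\log n)\cap k{\text -}\CFL$. The theorem above forces $G$ to be virtually nilpotent, so it has a finite-index nilpotent subgroup $N$ with $\WP_N\in k{\text -}\CFL$ by subgroup closure. Here I would invoke Brough's result that the integral Heisenberg group is not poly-context-free, together with the elementary fact that a non-virtually-abelian finitely generated nilpotent group contains a Heisenberg subgroup: passing first to a torsion-free finite-index subgroup $N'$ of $N$ (still non-virtually-abelian), choose $g$ in the penultimate term $\gamma_{s-1}(N')$ and $h\in N'$ with $[g,h]\neq e$; this commutator lies in the last nonvanishing term $\gamma_s(N')$ and is therefore central, so $\langle g,h\rangle$ is a two-generated, non-abelian, torsion-free, class-two subgroup, hence a copy of the Heisenberg group. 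Were $N$ not virtually abelian, subgroup closure would put the Heisenberg word problem into $k{\text -}\CFL$, a contradiction. Thus $G$ is virtually abelian, with a finite-index subgroup isomorphic to $\mathbb{Z}^m$, and $\WP_{\mathbb{Z}^m}\in k{\text -}\CFL$ by subgroup closure.

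It remains to bound $m$, and this is where the main obstacle lies. The required sharp lower bound $\mathbb{Z}^m\notin(m-1){\text -}\CFL$ cannot be detected by time complexity, since $\DTIME_1(n\log n)$ is insensitive to the constant factor $m$ in $\log\gamma(n)=m\log n+O(1)$; it must come entirely from the strictness of the context-free intersection hierarchy (Liu and Weiner), made explicit for the word problem of $\mathbb{Z}^m$ by Brough. Granting that $\mathbb{Z}^m\in k{\text -}\CFL$ implies $m\le k$, we conclude that $G$ is virtually abelian of rank at most $k$, which completes the equivalence.
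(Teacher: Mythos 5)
Your proof is correct and follows essentially the same route as the paper, which simply combines the theorem characterizing $\DTIME_1(n\log n)$ with Brough's Proposition~4.3 and Lemma~4.6 (a finitely generated nilpotent group has word problem in $k{\text -}\CFL$ if and only if it is virtually $\mathbb{Z}^m$ with $m\le k$). You unpack Brough's result a bit further (closure properties of $k{\text -}\CFL$, reduction to the Heisenberg group and to $\mathbb{Z}^m\notin(m-1){\text -}\CFL$), but the key external inputs are the same ones the paper cites.
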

\begin{proof}
The result follows from Proposition 4.3 and Lemma 4.6 in \cite{Brough:PolyCF}: for a finitely generated nilpotent group,  $\WP_G\in k{\text -}\CFL$ if and only if $G$ is virtually $\mathbb{Z}^m$ for $m\leq k$.
\end{proof}

\begin{question}
Can the nilpotency class of a nilpotent group be determined by the complexity of the word problem?
\end{question}

\section{The word problem in automaton groups}

\subsection{Automaton groups}

Let us review necessary information about automaton groups (see \cite{Nekr:SSGroups} for more details).

Let $X$ be a finite alphabet, fixed for the rest of the paper. Automaton groups are generated by a special type of finite-state transducers that will be called just automata. An \textit{automaton} over $X$ is the tuple $A=(S,X,t,o)$, where $S$ is a finite set of states, $t:S\times X\rightarrow S$ the transition map, and $o:S\times X\rightarrow X$ the output map. The maps $t$ and $o$ are naturally extended to the maps $t:S\times X^{*}\rightarrow S$ and $o:S\times X^{*}\rightarrow X^{*}$ by the rules:
\begin{align*}
t(s,\epsilon)=s, \ t(s,xv)=t(t(s,x),v), \quad  o(s,\epsilon)=\epsilon, \ o(s,xv)=o(s,x)o(t(s,x),v),
\end{align*}
where $x\in X, v\in X^{*}$ and $s\in S$. The rules correspond to the standard interpretation of transducers as computational machines: the word $o(s,v)$ is the output of the computation $A(v)$ starting with the initial state $s$, while the state $t(s,v)$ is the terminal state after the computation.

An automaton $A$ is called \textit{invertible} if $o(s,\cdot)$ induces a permutation on $X$ for every $s\in S$. In this case, $o(s,\cdot):X^{*}\rightarrow X^{*}$ is a bijection for every $s\in S$, and the inverse transformations are also given by an automaton, the inverse to $A$. The \textit{automaton group} $G_A$ is the subgroup of $Sym(X^{*})$ generated by the transformations $o(s,\cdot)$, $s\in S$. Since we are interested only in groups, we always assume that the generating automaton is minimal, that is, different states define different transformations. Therefore, we can identify $s$ and $o(s,\cdot)$, and say that the group $G_A$ is generated by $S$. Further, speaking about automaton groups, we always assume that they are defined over the alphabet $X$ and $S$ is its automaton generating set closed under inversion, if it is not stated otherwise. Also, the maps $t$ and $o$ preserve the set $X^k$ for every $k\in\mathbb{N}$, so that we can talk about $A$ as an automaton over the alphabet $X^k$. Note that passing from $X$ to $X^k$ does not change the automaton group.

The representation of elements of $G_A$ by words in $S^{*}$ naturally comes from automata composition as follows.
The maps $t$ and $o$ are further extended to the maps $t:S^{*}\times X^{*} \rightarrow S^{*}$ and $o:S^{*}\times X^{*}\rightarrow X^{*}$ by the rules:
\begin{align*}
t(sw,v)=t(s,v)t(w,o(s,v)), \quad  o(\epsilon,v)=v, \ o(sw,v)=o(w,o(s,v)),
\end{align*}
where $x\in X, v\in X^{*}$ and $s\in S, w\in S^{*}$. The rules correspond to the right composition of automata. (We are using right actions, because the Turing machines that will be used for solving the word problem process words from left to right.) Then a word $w\in S^{*}$ represents the element $g=o(w,\cdot)\in G_A$.

The action of the group $G_A$ on the set $X^{*}$ is self-similar in the following sense. For $w\in S^{*}$ and $x\in X^{*}$, the word $t(w,x)\in S^{*}$ is called the \textit{section} of $w$ at $x$ and is denoted by $w|_x$. The words $w$ and $w|_x$ have the same length for every $x\in X^{*}$. Let $\pi_w$ denote the permutation of $X$ induced by $o(w,\cdot)$. Now, if the words $w$ and $w|_x$ represent group elements $g$ and $h$ respectively, then $g(xv)=\pi_w(x)h(v)$ for all $v\in X^{*}$.

Every automaton group $G_A$ has a solvable word problem. Indeed, a word $w\in S^{*}$ represents the identity element of $G_A$ if and only if $\pi_v=\varepsilon$ for every section $v$ of $w$, where $\varepsilon$ is the identity permutation. The sections $v$ and permutations $\pi_v$ are computable in linear time as follows. First, there exists a finite-state automaton over $S$ that, given a word $w\in S^{*}$, recognizes $\pi_w=\varepsilon$. The states are the permutations of $X$, the initial and final state is $\varepsilon$, and the arrows are $\pi\xrightarrow{s} \pi\cdot \pi_s$ for $s\in S$, $\pi\in Sym(X)$. Second, the dual to the automaton $A$ computes sections. Namely, consider the automaton $B=(X,S,t',o')$, where $t'(x,s)=o(s,x)$ and $o(x,s)=t(s,x)$. Then, given a word $w\in S^{*}$ and $x\in X$, the output of the computation $B(w)$ starting from the state $x\in X$ is exactly the word $w|_x$. Explicitly, we have
\begin{equation*}\label{eqn:section_word}
(s_1s_2\ldots s_n)|_x=s'_1s'_2\ldots s'_n, \ \mbox{where } s'_i=s_i|_{x_i} \mbox{ and } x_{i+1}=\pi_{s_i}(x_{i}), x_1=x.
\end{equation*}
Therefore, since the number of sections is at most exponential, the word problem in automaton groups is solvable in at most exponential time.

\subsection{The word problem in contracting automaton groups}

A more effective way of calculating words that represent sections could lead to a more effective way to solve the word problem.

\begin{definition}
An automaton group $G$ is called \textit{contracting} if $l_S(w|_x)<|w|$ for all $x\in X$ and all sufficiently long words $w\in S^{*}$.

An automaton group $G$ is called \textit{strongly contracting} if $\sum_{x\in X} l_S(w|_x)<|w|$ for all sufficiently long words $w\in S^{*}$.
\end{definition}

Note that a group may be (strongly) contracting for one generating automaton and not (strongly) contracting for another one. One can define a group $G$ to be \textit{(strongly) contracting} if it is (strongly) contracting for some automaton representation.

The word problem in contracting groups is solvable in polynomial time (see \cite[Proposition~2.13.10]{Nekr:SSGroups}).
This result can be deduced from the following analog of Master theorem in algorithm analysis.

\begin{theorem}\label{thm:timecomp_autom}
Let $G$ be an automaton group. Let $L\in\mathbb{N}$.
\begin{enumerate}
  \item If $l_S( w|_x )<|w|$ for all $w\in S^{L}$ and $x\in X$, then $\WP_G\in\Poly$. 
  \item If $\sum_{x\in X} l_S( w|_x )\leq |w|$ for all $w\in S^{L}$, then $\WP_G\in\DTIME_{*}(n\log n)$.
  \item If $\sum_{x\in X} l_S( w|_x )< |w|$ for all $w\in S^{L}$, then $\WP_G\in\DTIME_{*}(n)$.
\end{enumerate}
\end{theorem}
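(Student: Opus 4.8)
The plan is to prove all three parts by a single divide-and-conquer that, on input $w\in S^{*}$ of length $n$, computes the reduced sections of $w$ level by level and checks that the induced permutation is trivial at every node, stopping once all surviving sections vanish; by the criterion recalled above, $w=_Ge$ if and only if $\pi_v=\varepsilon$ for every section $v$, so this decides the word problem. Fix the window $L$ from the hypothesis and precompute a constant-size table that, for every word $u\in S^{\le L}$ (finitely many such words $u$) and every $x\in X$, stores the permutation $\pi_u(x)$, the outgoing letter, and a geodesic word representing the section $u|_x$. Reducing an arbitrary word then means cutting it into blocks of length $L$, threading the incoming letter through the blocks as in the section formula $(s_1\dots s_n)|_x=s_1'\cdots s_n'$, and replacing each block's raw section by the tabulated geodesic. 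Since the dual automaton already produces raw sections in linear time, running $|X|$ parallel threads with constant block buffers lets a multi-tape machine output all reduced sections $w|_x$, $x\in X$, together with their permutations, in time $O(n)$.

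The arithmetic heart is a length bound obtained from the hypothesis by subadditivity. Writing $\Phi(w)=\sum_{x\in X}l_S(w|_x)$ and using $(uv)|_x=(u|_x)(v|_{\pi_u(x)})$ together with the fact that $\pi_u$ permutes $X$, one gets $\Phi(uv)\le\Phi(u)+\Phi(v)$; cutting $w$ into $L$-blocks and summing the per-block hypothesis then yields, up to an $O(1)$ remainder, $\Phi(w)\le c\,|w|$ with $c=1-\tfrac1L<1$ in case (3), $\Phi(w)\le|w|$ in case (2), and $\max_x l_S(w|_x)\le c\,|w|$ with $c<1$ in case (1). Because the tabulated sections are geodesic, these bounds transfer to the \emph{computed} reduced words; moreover empty sections (those equal to $e$) may be discarded, so at every level the number of surviving words is at most their total length, and a multi-tape machine can process one level in time $O(\text{total current length})$.

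With these bounds the three cases become the three regimes of the Master theorem, analysed by summing the per-level work over all levels; write $\tilde\Phi_k$ for the total length of the computed reduced sections at level $k$. In case (3) the total length telescopes, $\tilde\Phi_k\le c^k n$, so the whole computation costs $\sum_{k}O(c^k n)=O(n)$ and halts as soon as the total reaches zero, giving $\WP_G\in\DTIME_{*}(n)$. In case (1) every individual section shrinks by the factor $c<1$, so after $O(\log n)$ levels all sections lie in the finite nucleus and are decided directly, while the total length grows by at most the bounded factor $|X|c$ per level; summed over $O(\log n)$ levels this is $(|X|c)^{O(\log n)}n=\mathrm{poly}(n)$, giving $\WP_G\in\Poly$. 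In case (2) the total length is only preserved, $\tilde\Phi_{k+1}\le\tilde\Phi_k\le n$, so each level still costs $O(n)$ and the bound $\DTIME_{*}(n\log n)$ follows provided the computation runs for only $O(\log n)$ levels.

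That proviso is exactly where the main obstacle lies. Unlike case (1), the hypothesis $\sum_x l_S(w|_x)\le|w|$ controls only the \emph{total} section length, not the maximum, so a priori the recursion tree could have depth $\Omega(n)$: an unbalanced split into one long section and several trivial ones would give $O(n^2)$ rather than $O(n\log n)$. The plan is to rule this out by showing that the critical hypothesis already forces $G$ to be contracting — equivalently, that its nucleus is finite and every length-$n$ element reaches the nucleus after $O(\log n)$ applications of sections — so that the computed reduced words stay nucleus-bounded and vanish after logarithmically many levels. Establishing this contraction from the mere inequality $\sum_x l_S(w|_x)\le|w|$, and bridging the gap between the block-reduced word length that the algorithm manipulates and the true group length that the hypothesis bounds, is the step I expect to be the most delicate; it is precisely what separates the critical regime (2) from the strictly contracting regime (3), where the telescoping makes the depth irrelevant.
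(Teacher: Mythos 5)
Your treatment of items (1) and (3) follows the paper's own route: tabulate geodesic representatives of the sections of $L$-blocks, thread the letter through the blocks, use subadditivity of $\Phi(w)=\sum_x l_S(w|_x)$ to get the per-level length bounds, and sum the geometric (resp.\ polynomially growing) series over $O(\log n)$ levels. That part is correct and essentially identical to the paper.

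The genuine gap is item (2), and you have located it yourself without closing it. You correctly observe that the hypothesis $\sum_x l_S(w|_x)\le|w|$ only keeps the total level length from growing and does not, by itself, bound the recursion depth: a word with $w|_x=w$ and $\pi_w=\varepsilon$ makes the iteration loop forever, so the algorithm as described need not even terminate. Your ``plan'' to show that the critical hypothesis forces contraction is precisely the missing lemma, and it is the hardest part of the theorem; announcing that you expect it to be delicate is not a proof of it. The paper closes this as follows. Let $N\subseteq S^{L}$ be the set of blocks $w$ that have, at every level $n$, a (necessarily unique) letter $x_n\in X^n$ with $l_S(w|_{x_n})=L$, all other sections at level $n$ being trivial. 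Following the active branch of each $w\in N$ gives an eventually periodic sequence of words in $N$; replacing $X$ by a suitable power $X^k$ makes all periods equal to $1$ and all preperiods $0$ or $1$, so every $w\in N$ satisfies $w|_x|_y=w|_x$ for some $x,y$. One then checks that two words $u,v$ with $\pi_u=\pi_v$ and $u|_x=u$, $v|_x=v$ represent the same group element, so the set $F\subset G$ of elements represented by such self-reproducing words is finite; choosing $m$ with $l_S(g)<L'=mL$ for all $g\in F$ and arguing by contradiction shows that $l_S(w|_x)<|w|$ for all $w\in S^{L'}$ and $x\in X^2$. This reduces item (2) to item (1)'s hypothesis (for the enlarged block length and alphabet power), which is what actually guarantees the $O(\log n)$ depth. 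Without this argument, or some substitute for it, your proof of item (2) is incomplete.
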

\begin{proof}
The condition in item 1 implies that there exists a constant $0\leq\lambda'<1$ such that $l_S(w|_x)\leq \lambda'|w|$ for all $w\in S^{L}$. For every $w\in S^{L}$ and $x\in X$, fix a word $w_x\in S^{*}$ such that $w_x=_Gw|_x$ and $|w_x|=l_S(w|_x)\leq \lambda'|w|$. Put $\lambda=\lambda'+(1-\lambda')/2<1$.

Let us construct the (multi-tape) Turing machine $M_x$ for $x\in X$ over the alphabet $S$ that, given a word $w\in S^{*}$, computes a word $M_x(w)\in S^{*}$ in linear time such that $M_x(w)=_Gw|_x$ and $|M_x(w)|\leq \lambda |w|$ for $|w|\geq L$. The machine $M_x$ operates as follows: given a word $w\in S^{*}$ of length $\geq L$, split $w$ into subwords of length $L$, $w=w^{(1)}w^{(2)}\ldots w^{(m)}v$, $|w^{(i)}|=L$ and $|v|<L$, and compute
\[
M_x(w) = w^{(1)}_{x_1}w^{(2)}_{x_2}\ldots w^{(m)}_{x_m} v|_{x_{m+1}}, \mbox{ where $x_1=x$ and $x_{i+1}=\pi_{w^{(i)}}(x_i)$.}
\]
Then $M_x$ works in linear time, $M_x(w)=_Gw|_x$ and $|M_x(w)|\leq\lambda'Lm+|v|\leq \lambda|w|$ for all words $w$ of length $\geq L$.

A multi-tape Turing machine $M$ solving the word problem in $G$ will have the input alphabet $\Gamma=S\cup\{\#\}$, the input tape, an $x$-tape for each $x\in X$, and tapes for operation of $M_x$. The machine $M$ will accept a given word $w=v_1\#v_2\# \ldots \#v_m$, $v_i\in S^{*}$, if $v_i=_Ge$ for every $i$, and rejects otherwise. The machine operates as follows.
\begin{enumerate}
  \item If $w$ is empty, then accept.
  \item Scan the input tape word $w=v_1\#v_2\# \ldots \#v_m$. If $|v_i|<L$ and $v_i\neq_Ge$, then reject. If $|v_i|<L$ and $v_i=_Ge$, then remove $v_i$ from $w$. 
  \item Scan the input tape word $w=v_1\#v_2\# \ldots \#v_m$. Compute the permutations $\pi_{v_i}$; if $\pi_{v_i}\neq e$ for some $i$, reject. 
  \item Scan the input tape word $w=v_1\#v_2\# \ldots \#v_m$. Compute $M_x(v_i)$ on the $x$-tape for each $x\in X$ simultaneously, for $i=1,2,\ldots,m$; if we read $\#$, then print $\#$ on each $x$-tape. After processing, the content of the $x$-tape is $M_x(v_1)\#M_x(v_2)\#\ldots\#M_x(v_m)$.
  \item Empty the input tape. Copy the content of the $x$-tapes into the input tape separating them with $\#$, and skip consecutive symbols $\#$ in the process. Empty the $x$-tapes. Go to step 1.
\end{enumerate}
At the $k$th stage of performing the steps 1-5 starting with a word $w\in S^{*}$, the input tape contains a word $w'=v_1\#v_2\#\ldots\#v_m$, where $v_i$ represent all the sections of $w$ at words of the $k$th level $X^k$. When we go from level to level, the length of the word on the input tape could increase; however, the length of each $v_i$ decreases exponentially, here $|v_i|\leq \lambda^k|w|$ at the $k$th stage. Therefore, the Turing machine $M$ stops after $O(\log n)$ stages.

Since $M_x$ work in linear time, the steps 1-5 are performed in linear time. For an input word $w$ of length $n$, the word size on the input tape at the $k$th stage is bounded by $2\beta^k n$, where $\beta=\lambda|X|$ and the two multiplier stands to count the maximal possible number of symbols $\#$ that could separate $\beta^k n$ symbols from $S$. Therefore, the time complexity of $M$ is bounded by
\begin{equation}\label{eqn:timecomp_T_item1}
O(n)+O(\beta n)+O( \beta^2 n)+\ldots O(\beta^{\log n} n)=Poly(n).
\end{equation}
The degree of the polynomial can be bounded by $\log_{\lambda^{-1}} |X|$ with $\lambda=1-\frac{1}{2L}$.

The item 3 goes the same way; the estimate~(\ref{eqn:timecomp_T_item1}) holds with $\beta<1$.

The item 2 is almost analogous. The estimate~(\ref{eqn:timecomp_T_item1}) holds with $\beta=1$, except that the number of stages of $M$ may not be bounded by $O(\log n)$; even worse, the computation $M(w)$ may even not stop in general, for example, when $w|_x=w$ and $\pi_w=\varepsilon$. Note that the condition in item 2 is preserved when we replace $X$ with $X^k$ for any $k\in\mathbb{N}$ or replace $L$ with a multiple $L'=mL$. We will show that one can choose $k$ and $L'$ so that item 1 holds; this will guarantee that $M$ stops in $O(\log n)$ stages.

If there exists $k\in\mathbb{N}$ such that $l_S(w|_x)<|w|$ for all $w\in S^{L}$ and $x\in X^k$, then item 1 holds for $X$ replaced with $X^k$, and we are done. Otherwise, let $N\subseteq S^{L}$ consist of words $w\in S^L$ such that, for every $n\in\mathbb{N}$, there exists $x_n\in X^n$ such that $l_S(w|_{x_n})=L$. Note that such an $x_n\in X^n$ is unique for a given $w\in N$ and $l_S(w|_y)=0$ for all $y\in X^n, y\neq x_n$. For every $w\in N$, by taking sections of $w$ at $x_n$ for $n=0,1,2,\ldots$, we get an eventually periodic sequence of words in $N$. Choose a large enough $k\in\mathbb{N}$ so that, by replacing $X$ with $X^k$, all periods in these sequences are equal to $1$ and all preperiods are equal to $0$ or $1$. Then, for every $w\in N$, there exist $x,y\in X$ such that $w|_x|_y=w|_x\in N$. In addition, we can guarantee that $l_S(w|_x)<L$ for all $w\in S^L\setminus N$ and $x\in X$.

Note that if two words $u,v\in S^{*}$ satisfy $\pi_u=\pi_v$ and $u|_x=u$, $v|_x=v$ for some $x\in X$ (then $u|_y=_Gv|_y=_Ge$ for $y\in X$, $y\neq x$), then $u=_Gv$. It follows that there are only finitely many group elements $F\subset G$ represented by words $w$ satisfying $w|_x=w$. Choose a large enough $m\in\mathbb{N}$ so that $l_S(g)<L'=mL$ for all $g\in F$.

Now consider a word $w\in S^{L'}$ and split $w=w_1w_2\ldots w_m$, $|w_i|=L$. If we assume $l_S(w|_x)=|w|$ for some $x\in X$, then $w_i\in N$ for all $i$. Then, either $l_S(w|_x|_y)<|w|$ for all $y\in X$ or $l_S(w|_x|_y)=|w|$ and $w|_x|_y=w|_x$ for some $y\in X$. The last case is not possible, because then $w|_x$ represents an element in $F$ and $l_S(w|_x)<L'$. Therefore, $l_S(w|_x)<|w|$ for all $w\in S^{L'}$ and $x\in X^2$, and we are done.
\end{proof}


\begin{corollary}\label{cor:strong_contr_virtfree}
Let $G$ be a strongly contracting group. Then $\WP_G\in\DTIME_{*}(n)$.
\end{corollary}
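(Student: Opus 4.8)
The plan is to deduce this directly from item~3 of Theorem~\ref{thm:timecomp_autom}, after unwinding the two definitional conventions involved. First I would recall that, by definition, a strongly contracting group $G$ admits an automaton generating set $S$ (closed under inversion) for which $\sum_{x\in X} l_S(w|_x)<|w|$ holds for all sufficiently long words $w\in S^{*}$; that is, there is a threshold $N_0$ such that the inequality holds whenever $|w|\geq N_0$. The only discrepancy between this hypothesis and the one required by the theorem is that Theorem~\ref{thm:timecomp_autom} asks for the strict inequality on a single length class $S^{L}$, whereas the definition supplies it for all lengths above a threshold.

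To bridge this, I would simply set $L=N_0$. Every word $w\in S^{L}$ then has length $|w|=L\geq N_0$, so the strong contraction inequality $\sum_{x\in X} l_S(w|_x)<|w|$ holds for all $w\in S^{L}$. This is precisely the hypothesis of item~3 of Theorem~\ref{thm:timecomp_autom}, whose conclusion gives $\WP(G,S)\in\DTIME_{*}(n)$ for this particular generating automaton.

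Finally, I would address the dependence on the generating set, since strong contraction is a property of a particular automaton representation, while the corollary speaks of $\WP_G$ without reference to $S$. Here I would invoke the fact, noted in the introduction, that the class $\DTIME_{*}(t(n))$ for $t(n)\geq n$ is closed under inverse homomorphisms, intersection with regular languages, and inverse GSMs, and is therefore a genuinely group-theoretic (generating-set-independent) class. As $t(n)=n$ satisfies $t(n)\geq n$, membership of $\WP(G,S)$ in $\DTIME_{*}(n)$ for one favourable generating set $S$ yields $\WP_G\in\DTIME_{*}(n)$.

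There is essentially no hard step: the corollary is a direct specialization of Theorem~\ref{thm:timecomp_autom}. The only points needing care are the two bookkeeping matters above—converting \emph{sufficiently long} into a single fixed length $L$, and confirming that $\DTIME_{*}(n)$ is generating-set-independent so that selecting the automaton representation witnessing strong contraction is legitimate.
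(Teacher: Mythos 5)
Your proposal is correct and matches the paper's (implicit) argument: the corollary is stated as an immediate consequence of item~3 of Theorem~\ref{thm:timecomp_autom}, obtained exactly as you describe by fixing $L$ beyond the threshold in the definition of strong contraction and then using the generating-set independence of $\DTIME_{*}(n)$ noted in Section~2. No further commentary is needed.
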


Note that all strongly contracting groups have subexponential growth (see \cite{Grigorchuk:Degrees}). Therefore, it is interesting whether $\WP_G\in\DTIME_1(o(n^2))$ for some strongly contracting group that is not virtually nilpotent.

\begin{example}
The Grigorchuk group is generated by the automaton over the alphabet $X=\{0,1\}$ with states $S=\{e,a,b,c,d\}$ and the transition map:
\begin{align*}
e\xrightarrow{0|0}e && a\xrightarrow{0|1}e && b\xrightarrow{0|0}a && c\xrightarrow{0|0}a && d\xrightarrow{0|0}e,\\
e\xrightarrow{1|1}e && a\xrightarrow{1|0}e && b\xrightarrow{1|1}c && c\xrightarrow{1|1}d && d\xrightarrow{1|1}b.
\end{align*}
The group $G$ is strongly contracting:
\begin{align*}
\forall w\in \{a,b,c,d\}^{10} \quad \sum_{x\in X^3} l_S(w|_x)<10.
\end{align*}
Hence, the word problem in $G$ is solvable in linear time by a multi-tape Turing machine. Such a Turing machine with $3$ tapes was constructed by my student Matei Chornomorets and is available at \cite{Chernom:TuringGri}.

\begin{corollary}
For the Grigorchuk group $G$, $\WP_G\in\DTIME_{*}(n)\subseteq\DTIME_1(n^2)$ and $\WP_G\not\in\DTIME_1(o(n^{1+\alpha}))$ for $\alpha=\frac{\log 2}{\log \eta}\approx 0.7674$, where $\eta$ is the positive root of the polynomial $x^3-x^2-2x-4$.
\end{corollary}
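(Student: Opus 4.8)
The plan is to assemble the corollary from two ingredients that are already in place: the strong contraction of the Grigorchuk group, which feeds the upper bound, and the growth lower bound obtained from the crossing-sequence theorem, which feeds the lower bound. Neither half requires new machinery; the work is entirely in matching the Grigorchuk data to the general statements.

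For the upper bound, I would start from the explicit inequality in the Example, namely $\sum_{x\in X^3} l_S(w|_x)<10$ for all $w\in\{a,b,c,d\}^{10}$. Reading $A$ as an automaton over the alphabet $X^3$ (which, as noted in the text, does not change the group), this is precisely the hypothesis of item 3 of Theorem~\ref{thm:timecomp_autom} with $L=10$. Hence $G$ is strongly contracting and Corollary~\ref{cor:strong_contr_virtfree} gives $\WP_G\in\DTIME_{*}(n)$. The inclusion $\DTIME_{*}(n)\subseteq\DTIME_1(n^2)$ is then immediate from the Hartmanis--Stearns simulation $\DTIME_{*}(t(n))\subseteq\DTIME_1(t(n)^2)$ recalled in the introduction, applied with $t(n)=n$.

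For the lower bound I would invoke the growth of the Grigorchuk group. By the Erschler--Zheng estimate cited above, its growth function satisfies $\gamma_G(n)\succcurlyeq 2^{n^{\alpha}}$ with $\alpha=\frac{\log 2}{\log\eta}$, so $\log\gamma_G(n)=\Omega(n^{\alpha})$ and therefore $n\log\gamma_G(n)=\Omega(n^{1+\alpha})$. Feeding $\gamma_G(n)\succcurlyeq 2^{n^\alpha}$ into the earlier corollary on growth and single-tape time (which is itself a direct consequence of the crossing-sequence theorem $\WP_G\not\in\DTIME_1(o(n\log\gamma(n)))$) yields $\WP_G\not\in\DTIME_1(o(n^{1+\alpha}))$, since every function that is $o(n^{1+\alpha})$ is also $o(n\log\gamma_G(n))$.

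I do not expect a genuine obstacle, as both halves are specializations of results proved earlier in the paper. The only point deserving care is the bookkeeping about generating sets and alphabets: the inequality in the Example is stated over the free generators $\{a,b,c,d\}$ (which coincide with $S\setminus\{e\}$ and are closed under inversion, each being an involution) and over $X^3$ rather than $X$, and one must note that Theorem~\ref{thm:timecomp_autom} applies to automata over any alphabet while Proposition~\ref{prop:time_WP} makes the resulting complexity class membership independent of the chosen finite generating set. Everything else is a direct citation.
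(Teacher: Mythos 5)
Your proposal is correct and follows exactly the route the paper intends: the upper bound comes from the strong contraction inequality over $X^3$ via item 3 of Theorem~\ref{thm:timecomp_autom} (equivalently Corollary~\ref{cor:strong_contr_virtfree}) together with the Hartmanis--Stearns simulation, and the lower bound comes from the Erschler--Zheng growth estimate plugged into the crossing-sequence corollary. Nothing is missing, and the care you take about alphabets and generating sets is exactly the right bookkeeping.
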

The branching property of the Grigorchuk group suggests that its word problem may not be solvable by a real-time Turing machine:

\begin{question}
Is the word problem in the Grigorchuk group solvable in real-time?
\end{question}

The Grigorchuk group $G$ is the first one in the family of Grigorchuk's groups $G_\chi$ parametrized by $\chi\in\{0,1,2\}^{\mathbb{N}}$. The complexity of the word problem in $G_\chi$ can be controlled by the complexity of the sequence $\chi$. This was used in \cite{Garzon:GriWP} to prove that the word problem in the groups $G_{\chi}$ separates the time complexity classes above $\DTIME_{*}(n^2)$.
\end{example}

\subsection{The word problem in groups of polynomial automata}

Recall that we assume that different states of generating automata produce different transformations.
The state of an automaton producing the identity transformation is called \textit{trivial} and denoted by $e$.

\begin{definition}
An automaton $A=(S,X,t,o)$ with the trivial state $e$ is called \textit{bounded} if there exists a constant $C$ such that for every $s\in S$ and $n\in\mathbb{N}$ there are at most $C$ words $v\in X^n$ such that $s|_v=e$.
\end{definition}

\begin{theorem}
Let $G$ be a group generated by a bounded automaton. Then $\WP_G\in \DTIME_{*}(n\log n)$. 
\end{theorem}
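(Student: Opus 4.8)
The plan is to combine two facts about a bounded automaton group $G$: that it is contracting, so that individual sections shrink geometrically and only $O(\log n)$ levels of sectioning are needed; and that boundedness forces the \emph{total} size of all sections at each level to be linear in the input length, \emph{uniformly} over the level. Together these let the level-by-level multi-tape machine built in the proof of Theorem~\ref{thm:timecomp_autom} run in time $O(n\log n)$. More precisely, since bounded automaton groups are contracting, there are $k,L\in\mathbb{N}$ and $\lambda<1$ with $l_S(w|_v)\leq\lambda|w|$ for all $w\in S^{L}$ and $v\in X^{k}$; working over the alphabet $X^{k}$ we may therefore assume every section is strictly shorter than $w$, so that after $O(\log n)$ rounds of sectioning all sections have bounded length.

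The heart of the argument is the uniform total-size estimate. Write $w=s_1\ldots s_n$ and, for $v\in X^{k}$, expand the section as $w|_v=s_1|_{u_1}s_2|_{u_2}\cdots s_n|_{u_n}$, where $u_1=v$, $u_{i+1}=s_i(u_i)\in X^{k}$, and each $s_i|_{u_i}\in S$ is a single generator. Deleting the trivial letters gives $l_S(w|_v)\leq\#\{i:s_i|_{u_i}\neq e\}$. For each fixed position $i$ the map $v\mapsto u_i(v)$ is a composition of the bijections $s_1,\dots,s_{i-1}$ acting on $X^{k}$, hence a bijection of $X^{k}$; therefore
\[
\sum_{v\in X^{k}} l_S(w|_v)\ \leq\ \sum_{i=1}^{n}\#\{v\in X^{k}:s_i|_{u_i(v)}\neq e\}\ =\ \sum_{i=1}^{n}\#\{u\in X^{k}:s_i|_u\neq e\}.
\]
By the definition of a bounded automaton the number of non-trivial sections $\#\{u\in X^{k}:s|_u\neq e\}$ is bounded by a constant $C$ independent of the level $k$, so the right-hand side is at most $C\,|w|$. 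The decisive feature is that this bound is uniform in $k$: the activity does not compound from level to level, in contrast with the generic contraction estimate behind Theorem~\ref{thm:timecomp_autom}(1), where the total size may grow like $|X|^{k}$ and only yields polynomial time.

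With these two ingredients the machine of Theorem~\ref{thm:timecomp_autom} finishes the proof. At stage $k$ its tapes store representatives of the sections $w|_v$, $v\in X^{k}$, whose total length is $O(|w|)$ by the estimate above; passing from stage $k$ to stage $k+1$ costs time linear in that total, i.e.\ $O(|w|)$; and by contraction all sections become trivial after $O(\log n)$ stages, at which point the machine decides whether $w=_G e$. The running time is thus $O(|w|)\cdot O(\log n)=O(n\log n)$, so $\WP_G\in\DTIME_{*}(n\log n)$.

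The main obstacle is organising the bookkeeping so that the two estimates can be used simultaneously. The uniform bound $\sum_{v} l_S(w|_v)\leq C|w|$ carries a constant $C>1$ and so does \emph{not} meet the hypothesis $\sum_{x} l_S(w|_x)\leq|w|$ of Theorem~\ref{thm:timecomp_autom}(2); it is the combination with genuine contraction — which caps the number of stages at $O(\log n)$ — that recovers the linearithmic bound, the constant $C$ being absorbed into the $O$-notation. A secondary subtlety, already present in the proof of Theorem~\ref{thm:timecomp_autom}(2), is that a word may have a periodic section $w|_x=w$ that blocks termination; as there, one rules this out by passing to a sufficiently high power of the alphabet so that the finitely many group elements fixed by such sectioning have bounded length and are checked directly.
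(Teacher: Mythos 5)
Your proposal is correct and follows essentially the same route as the paper: contraction bounds the number of sectioning stages by $O(\log n)$, boundedness keeps the total length of all nontrivial section letters at each level at $O(n)$, and the level-by-level machine of Theorem~\ref{thm:timecomp_autom} then runs in time $O(n\log n)$. The only difference is that you spell out the bijection argument behind the uniform estimate $\sum_{v\in X^k} l_S(w|_v)\leq C|w|$, which the paper states as an immediate consequence of the definition of a bounded automaton.
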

\begin{proof}
Groups generated by bounded automata are contracting (see \cite[Theorem~3.8.8]{Nekr:SSGroups} or \cite{BondNerk}). We can use the Turing machines $M$ and $M_x$ from the proof of Theorem~\ref{thm:timecomp_autom} with the following modification: after computing $M_x(w)$, remove every appearance of the letter $e$. The boundedness property implies that there exists a constant $C$ such that, for every $w\in S^{n}$ and $m\in\mathbb{N}$, the total length of words $w|_x$ for $x\in X^m$, after removing $e$, is bounded by $Cn$. Then, at every stage of computing $M(w)$ for $w\in S^n$, the length of the word on the input tape is bounded by $Cn$. Therefore, the running time of $M$ is bounded by (\ref{eqn:timecomp_T_item1}) with $\beta=1$. The result follows.
\end{proof}

\begin{example}
The Basilica group $B$ is generated by the bounded automaton over the alphabet $X=\{0,1\}$ with states $S=\{e,a,b\}$ and the transition map:
\begin{align*}
e\xrightarrow{0|0}e, \ e\xrightarrow{1|1}e && a\xrightarrow{0|1}e, \ a\xrightarrow{1|0}b && b\xrightarrow{0|0}e, \ b\xrightarrow{1|1}a.
\end{align*}
The group $B$ is not strongly contracting and has exponential growth; the semigroup $\langle a,b\rangle$ is free. The Turing machine $M$ from the proof above makes $\Omega(n2^n)$ steps on the word $(ab)^{2^n}$. Hence, the running time of $M$ is $\Theta(n\log n)$. I do not know whether the word problem in $B$ is solvable in linear time.
\end{example}

Bounded automata are polynomial automata of degree zero.

\begin{definition}
An automaton $A=(S,X,t,o)$ with the trivial state $e$ is called \textit{polynomial} if there exists a polynomial $P(n)$ such that for every $s\in S$ and $n\in\mathbb{N}$ there are at most $P(n)$ words $v\in X^n$ such that $s|_v\neq e$. The smallest degree of $P(n)$ with this property is called the \textit{degree} of $A$.
\end{definition}

Polynomial automata admit simple combinatorial characterization: different simple cycles at nontrivial states are disjoint. Groups generated by polynomial automata are not necessary contracting. Nevertheless, their word problem is effectively solvable.

\begin{theorem}
Let $G$ be a group generated by a polynomial automaton of degree $d$. Then $\WP_G\in \DTIME_{*}(n(\log n)^{(d+1)^2})$. If $G$ is contacting, then $\WP_G\in\DTIME_{*}(n(\log n)^{d+1})$.
\end{theorem}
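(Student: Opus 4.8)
The plan is to run the same level-by-level Turing machine $M$ (built from the auxiliary machines $M_x$) used in the proof of Theorem~\ref{thm:timecomp_autom} and in the bounded case, keeping on the input tape, at the $m$th stage, words representing all sections $w|_x$ for $x\in X^m$, deleting every trivial letter $e$ as in the bounded proof, and rejecting as soon as some section has a nontrivial permutation. The running time of such a machine is governed by two quantities: the total reduced length $\Lambda(w,m)=\sum_{x\in X^m} l'(w|_x)$ at level $m$, where $l'$ counts non-$e$ letters, and the number $M$ of levels that must be processed before the machine can stop. Since the work spent at stage $m$ is $O(\Lambda(w,m))$, the total time is $O\bigl(\sum_{m\le M}\Lambda(w,m)\bigr)$.

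The solid, purely combinatorial, core is the estimate $\Lambda(w,m)=O(|w|\,m^{d})$. Writing $w=s_1s_2\cdots s_\ell$ with $\ell=|w|$ and viewing $A$ as an automaton over $X^m$, each section factors as $w|_y=(s_1|_{y^{(1)}})(s_2|_{y^{(2)}})\cdots(s_\ell|_{y^{(\ell)}})$, where $y^{(1)}=y$ and $y^{(i)}=o(s_1\cdots s_{i-1},y)$, and each factor $s_i|_{y^{(i)}}$ is a single generator. Hence $l'(w|_y)=\#\{i: s_i|_{y^{(i)}}\neq e\}$, and since $y\mapsto y^{(i)}$ is a bijection of $X^m$ for each fixed $i$,
\[
\Lambda(w,m)=\sum_{i=1}^{\ell}\#\{z\in X^m: s_i|_{z}\neq e\}\le \ell\cdot P(m)=O(|w|\,m^{d}),
\]
where $P$ is the degree-$d$ polynomial from the definition of a polynomial automaton. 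Consequently $\sum_{m\le M}\Lambda(w,m)=O(|w|\,M^{d+1})$, so everything reduces to bounding the depth $M$. Note also that, since $l_S\le l'$, storing shortest representatives instead of literal sections keeps the same total-length bound while adding contraction into the picture.

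If $G$ is contracting, the word length of the stored sections decays geometrically, so after $M=O(\log n)$ levels every section has length below the threshold $L$ and can be checked directly; substituting $M=O(\log n)$ into $O(nM^{d+1})$ gives the running time $O(n(\log n)^{d+1})$. In the general case $G$ need not be contracting, and, exactly as in item~2 of Theorem~\ref{thm:timecomp_autom}, the machine may fail to terminate because sections can reproduce themselves (for instance $w|_x=w$). Here the claim to establish is that $O((\log n)^{d+1})$ levels always suffice; plugging $M=O((\log n)^{d+1})$ into $O(nM^{d+1})$ then yields $O(n(\log n)^{(d+1)^2})$, as required. Thus the two bounds of the theorem differ only through the admissible depth $M$, namely $\log n$ versus $(\log n)^{d+1}$.

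The main obstacle is precisely this depth bound in the non-contracting case, together with the need to reconcile two competing length measures: the sparsity estimate $O(|w|\,m^{d})$ controls the literal reduced length and keeps the per-level cost small, whereas termination relies on section lengths genuinely shrinking, which individual literal sections need not do. I would resolve both by induction on the degree $d$, exploiting the defining feature of polynomial automata that distinct simple cycles at nontrivial states are disjoint. The sections that remain long over many levels are exactly those whose active path winds around these cycles; peeling off the outermost cycle layer, its contribution behaves like a degree-zero (bounded) layer whose length decays geometrically over $O(\log n)$ levels down to a residual automaton of degree $d-1$, so each of the $d+1$ layers costs one $\log n$ factor in depth and the total depth is $O((\log n)^{d+1})$. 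Termination is then obtained as in item~2 of Theorem~\ref{thm:timecomp_autom}: after passing to a suitable power $X^k$ of the alphabet all preperiods and periods of these winding sections become at most $1$, the eventually periodic sections represent only finitely many group elements, and detecting the resulting loops lets the machine halt. Carefully verifying this peeling argument, and in particular controlling how passing to $X^k$ interacts with the degree and with the estimate above, is where the real work lies; the needed structural input can be taken from \cite{Bondarenko:Schreier}.
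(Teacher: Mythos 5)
Your proposal follows essentially the same route as the paper: the same level-by-level machine with trivial letters deleted, the same per-stage length bound $O(n\,m^{d})$ (which you prove via the bijection $y\mapsto y^{(i)}$, whereas the paper merely asserts it from the polynomial property), and the same reduction of everything to the depth bound $O((\log n)^{d+1})$, which the paper obtains by citing Lemma~1* of \cite{Bondarenko:Schreier} rather than by your sketched inductive peeling argument. Since you correctly isolate that depth bound as the one remaining ingredient and point to the same reference for it, the argument matches the paper's proof in all essentials.
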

\begin{proof}
Let us pass to a power of the alphabet so that every simple cycle in the generating automaton is a loop. We construct the Turing machine $M_x$ for computing a word $M_x(w)$ in linear time such that $M_x(w)=_Gw|_x$ as follows. For a word $w\in S^{n}$, compute the word $v=w|_x\in S^{n}$, and remove every appearance of the letter $e$ from $v$. Compare the words $v$ and $w$; if $v\neq w$, then return $v$; if $v=w$, then return the empty word.

Let $M$ be the Turing machine constructed in the proof of Theorem~\ref{thm:timecomp_autom} using the machines $M_x$. It follows from Lemma~1* from \cite{Bondarenko:Schreier} that the machine $M$ stops after $O( (\log n)^{d+1} )$ stages. The polynomial property implies that, at the $k$th stage of computing $M(w)$ for $w\in S^n$, the length of the word on the input tape is at most $nO(k^d)$. Therefore, the running time of $M$ is bounded by
\[
\sum_{k=1}^{ O((\log n)^{d+1}) } nO(k^d)=O( n(\log n)^{(d+1)^2}  ).
\]

If $G$ is contacting, we can use the Turing machines $M_x$ from the proof of Theorem~\ref{thm:timecomp_autom} in addition to removing the letter $e$. This will guarantee that $M$ performs $O(\log n)$ stages, and its running time is $O( n(\log n)^{d+1})$.
\end{proof}

\begin{example}
The smallest non-trivial example of a polynomial automaton of degree one is the automaton over
$X=\{0,1\}$ with states $S=\{e,a,b\}$ and the transition map:
\begin{align*}
e\xrightarrow{0|0}e, \ e\xrightarrow{1|1}e && a\xrightarrow{0|1}e, \ a\xrightarrow{1|0}a && b\xrightarrow{0|0}a, \ b\xrightarrow{1|1}b.
\end{align*}
The group $G$ generated by this automaton is not contracting.
The Turing machine $M$ from the proof above makes $\Omega(n^22^n)$ steps on the word $w=(baba^{-1})^{2^n}$. It follows that the running time of the Turing machine is $\Omega(n(\log n)^2)$.
I do not know whether the word problem in $G$ is solvable in linear time.
\end{example}

\begin{question}
Is the word problem in groups generated by bounded/polynomial automata solvable in linear time (by a multi-tape Turing machine)? If the answer is no, does the word problem in these groups distinguish the complexity classes $\DTIME_{*}(n(\log n)^d)$ for different $d\in\mathbb{N}$?
\end{question}

\end{document}